\documentclass[a4paper,11pt, reqno]{amsart} 
\setlength{\parindent}{0cm}

\usepackage[margin=2cm]{geometry} 
\pdfoutput=1

\usepackage[utf8]{inputenc}
\usepackage[T1]{fontenc}
\usepackage{lmodern}
\usepackage[french,english]{babel}
\usepackage{amsthm, amssymb, amsmath, amsfonts, mathrsfs, dsfont, esint, textcomp, bm}
\usepackage[colorlinks=true, pdfstartview=FitV, linkcolor=blue, citecolor=blue, urlcolor=blue,pagebackref=false]{hyperref}
\usepackage{mathtools, enumitem}
\usepackage{tikz}




\usepackage{microtype}

\usepackage[notcite,notref,color,final]{showkeys}
\definecolor{labelkey}{gray}{.8}
\definecolor{refkey}{gray}{.8}

\definecolor{darkblue}{rgb}{0,0,0.7} 
\definecolor{darkred}{rgb}{0.9,0.1,0.1}
\definecolor{darkgreen}{rgb}{0,0.5,0}

\newtheorem{thm}{Theorem}[section]

\newtheorem{lem}[thm]{Lemma}

\theoremstyle{remark}
\newtheorem{rem}[thm]{Remark}
\theoremstyle{definition}

\renewcommand{\leq}{\leqslant}
\renewcommand{\geq}{\geqslant}

\renewcommand{\subset}{\subseteq}

\newcommand{\I}{\mathcal{I}}

\newcommand{\uh}{u_{\mathsf{h}}}
\newcommand{\ph}{p_{\mathsf{h}}}
\newcommand{\F}{\mathcal{F}}

\newcommand{\N}{\mathbb{N}}

\newcommand{\R}{\mathbb{R}}

\renewcommand{\P}{\mathbb{P}}

\newcommand{\eps}{\varepsilon}

\renewcommand{\div}{\mathop{\rm div}}

\newcommand{\aeps}{\eps^\frac{d}{d-2}}

\newcommand{\wto}{\rightharpoonup}
\newcommand{\Rd}{\R^d}

\newcommand{\rr}{\mathcal{R}}
\newcommand{\km}{k_{max}}
\newcommand{\supp}{\text{supp}}

\DeclareMathOperator{\dist}{dist}
\DeclareMathOperator{\dv}{div}



\title[]{Convergence of the pressure in the homogenization of the Stokes equations in randomly perforated domains}
\author{Arianna Giunti, Richard M. H\"ofer}


\begin{document}
%

%


%
%
%
%
%
%
%
%
%

\begin{abstract}
We consider the homogenization to the Brinkman equations for the incompressible Stokes equations in a bounded domain which is perforated by a random
collection of small spherical holes. This problem has been studied by the same authors in [A.~{Giunti} and R.M.~{H\"ofer}, \textit{Homogenization for
  the Stokes equations in randomly perforated domains under almost minimal assumptions
  on the size of the holes}] where convergence of the fluid velocity field towards the solution of the Brinkman equations has been established. 
  In the present we consider the pressure associated to the solution of the Stokes equations in the perforated domain.
  We prove that it is possible to extend this pressure inside the holes and slightly modify it in a region of asymptotically negligible harmonic
   capacity such that it weakly converges to the pressure associated with the solution of the Brinkman equations.
\end{abstract}
\maketitle


\section{Introduction}
\label{sec:intro}
In this paper we consider the steady incompressible Stokes equations
\begin{align}
	\label{Stokes}
\begin{cases}
-\Delta u_\eps + \nabla p_\eps = f \ \ \ &\text{in $D^\eps$}\\
\nabla \cdot u_\eps = 0 \ \ \ &\text{in $D^\eps$}\\
u_\eps = 0 \ \ \ &\text{on $\partial D^\eps$}
\end{cases}
\end{align}
in a domain $D^\eps$, that is obtained by removing from a bounded set $D \subset \Rd$, $d > 2$, a random number of small balls having random centres and 
radii. More precisely, for $\eps > 0$, we define
\begin{align}
	\label{perforation}
D^\eps= D \backslash H^\eps, \ \ \ \ \ H^\eps:= \bigcup_{z_i \in \Phi \cap \frac{1}{\eps}D} B_{\aeps \rho_i} (\eps z_i),
\end{align}
where $\Phi$ is a Poisson point process on $\Rd$ with homogeneous intensity rate $\lambda > 0$, and the radii $\{\rho_i \}_{z_i \in \Phi} \subset \R_+$ are identically and
independently distributed unbounded random variables which satisfy
\begin{align}
	\label{power.law}
 \langle \rho^{(d-2)+ \beta} \rangle < +\infty, \ \ \  \text{for some $\beta > 0$.}
\end{align}  
In \cite{GH19withoutpressure}, we show that for almost every realization of $H^\eps$ in \eqref{perforation}, the solution  $u_\eps \in H^1_0(D^\eps; \Rd)$ to \eqref{Stokes} weakly converges in $H^1_0(D; \Rd)$ to the solution $\uh$ of the Brinkman equations
\begin{align}
	\label{Brinkman}
\begin{cases}
-\Delta \uh + \mu \uh + \nabla \ph  = f \ \ \ &\text{in $D$}\\
\nabla \cdot \uh = 0 \ \ \ &\text{in $D$}\\
\uh= 0 \ \ \ &\text{on $\partial D$}.
\end{cases}
\end{align}
Here,
\begin{align}\label{strange.term.Stokes}
\mu= C_d \lambda \langle \rho^{d-2} \rangle \mathrm I,
\end{align}
where $\langle \cdot \rangle$ denotes the expectation under the probability measure on the radii $\rho_i$, and the constant $C_d > 0$ depends only on the dimension $d$.  In this paper, we  give a convergence result for the families of pressures $\{ p_\eps\}_{\eps > 0}$ in \eqref{Stokes}. This result has been announced in our previous paper, 
\cite[Remark 2.3]{GH19withoutpressure}.

\medskip

There are many results in the literature dealing with the previous homogenization problem when the domain $D^\eps$ is perforated by periodic or random holes \cite{AllaireARMA1990a, Brillard1986-1987, CarrapatosoHillairet2020, Desvillettes2008, Hillairet2018, HillairetMecherbet2020,  Hillairet2017, MarKhr64, Rubinstein1986, SanchezP82}. However, the setting of \cite{GH19withoutpressure} and of the present paper is the only one allowing for the presence of ``many'' holes overlapping with overwhelming probability. By means of strong law of large numbers (see also \cite[Appendix C]{GH19withoutpressure}) it is easy to see that condition \eqref{power.law} only implies that, with overwhelming probability, the number of overlapping balls in $D^\eps$ is less than $\eps^{-(d-2)}$ (over a total number of $\eps^{-d}$). For a detailed discussion on the literature studying the homogenization of \eqref{Stokes} to \eqref{Brinkman} we refer to \cite{GH19withoutpressure}. We also mention that, in a similar probabilistic setting for the distribution of the holes $H^\eps$, an homogenization result for the Poisson problem is shown in \cite{GHV_Poisson}.

\medskip

With the exception of the case of the periodically perforated domains studied in \cite{AllaireARMA1990a}, the convergence of the pressure for \eqref{Stokes} is not considered. 
Both the Stokes and the Brinkman equations may be reformulated so that the pressure only plays the role of a Lagrange multiplier for the incompressibility of the fluid.
In particular, by testing the equations only with divergence-free functions, it is possible to obtain convergence results for the velocity field $u_\eps$ without any bound on the pressures $p_\eps$. This approach has also been used in our previous work \cite{GH19withoutpressure}. As a physical quantity, though, the pressure is important in itself. From an application oriented point of view, it is therefore desirable to obtain convergence results not only for the fluid velocity field but also for the pressure.

\medskip

Obtaining bounds for some extension of $p_\eps$ in $L^2(D)$ that are uniform in $\eps$ is usually a challenging problem. For solutions $(u,p)$ of a Stokes system in a general domain $D$ with no-slip boundary conditions, the standard energy estimate only provides an estimate for the velocity field $u$. Classically, estimates for the pressure $p$ are then obtained with the help of a Bogovsky operator that maps functions $g \in L^q(D)$ into vector fields $v \in H^q (D; \Rd)$ satisfying  $\dv v = g$ and $v = 0$ in $\partial D$. The norm of the previous operator, though, might strongly depend on the geometry of the set $K$. Hence, an immediate application of this method for each $p_\eps$ in $D^\eps$ does not yield a priori a uniform bound in $\eps$. We recall, indeed, that by our assumptions \eqref{power.law}, many balls in $H^\eps$ may overlap and give rise to many clusters having very different geometrical properties. 

\medskip

In \cite{AllaireARMA1990a}, for periodically distributed holes, a  suitable extension $P_\eps (p_\eps)$ for $p_\eps$ is obtained such that  $P_\eps p_\eps \rightharpoonup \ph$ in $L^2_0(D)$. We remark that, as $u_\eps \rightharpoonup \uh$  in $H^1_0(D ; \Rd)$, this is the optimal result that one could expect. In this paper, the problem of finding a good extension for the pressures, which is uniformly bounded in $L^2$ is dealt with by a duality argument. This, in particular, allows to define such extensions provided the construction of a good ``reduction'' operator, this time mapping vector fields  $v\in H^1(D; \Rd)$ into vector fields $v \in H^1_0(D^\eps; \Rd)$. In our setting, namely in the case of overlapping holes, the same challenges mentioned above for the construction of a Bogovski operator do arise also with this method. 
%
%

\medskip

In the current paper, we show that, at the expense of removing from the domain $D$ a set $E^\eps \supset D^\eps$ which is only slightly bigger than $H^\eps$, we may construct a Bogovski operator $D \setminus E^\eps$ that is uniformly bounded in $\eps$ (cf. Lemma \ref{lem:Bogovski}). The set $E^\eps$ is only slightly bigger than $H^\eps$ in the sense that the harmonic capacity of the difference $E^\eps \setminus D^\eps$ vanishes at infinity. From this result, we may construct a function $\tilde p_\eps \in L^2(D)$, which satisfies $\tilde p_\eps = p_\eps$ outside of $E^\eps$, and such that  $\tilde p_\eps \wto \ph$ in $L^q$, for $q< \frac{d}{d-1}$. We remark that $\tilde p_\eps$ is not a proper extension for $p_\eps$, as it might differ from $p_\eps$ on the (small) set $E^\eps \setminus H^\eps$. 

\medskip


The covering $E^\eps$ of $H^\eps$ is constructed in such a way that the Bogovski operator for $D \setminus E_\eps$ may be obtained by an iterative application of Bogovski operators on annuli.  We stress that \eqref{power.law} rules out the occurrence of clusters made of too many balls of similar size. We emphasize, however, that it neither prevents the balls generating $H^\eps$ from overlapping, nor it implies a uniform upper bound on the number of balls of very different size which combine into a cluster (see \cite[Section 5]{GH19withoutpressure}). The covering $E^\eps \subset H^\eps$ is therefore constructed with the purpose of providing a more regular set, where clusters of balls are combined together.   We stress that we only obtain a convergence of the pressure in the sub-optimal spaces $L^q$, $1 \leq q < \frac{d}{d-1}$. However, this is enough to give an alternative proof of the convergence of the fluid velocity field $u_\eps$ to $\uh$ weakly in $H^1(D; \Rd)$ by means of oscillating test functions as done in \cite{CioranescuMurat} and \cite{AllaireARMA1990a} (See Remark \ref{rem:alternative}). 


\section{Setting and main result}\label{setting}

Let $D \subset \R^d$, $d >2$, be an open and bounded set that is star-shaped with respect to the origin. For $\varepsilon > 0$, we denote by $D^\varepsilon \subset D$ the 
domain obtained as in \eqref{perforation}, namely by setting $D^\eps = D \backslash H^\eps$ with 
\begin{equation}\label{holes}
H^\varepsilon := \bigcup_{z_j \in \Phi \cap \frac 1 \eps D} B_{\varepsilon^{\frac{d}{d-2}} \rho_j} (\varepsilon z_j).
\end{equation}
Here, $\Phi \subset \Rd$ is a homogeneous Poisson point process having intensity $\lambda > 0$ and the radii
$\rr :=\{ \rho_i \}_{z_i \in \Phi}$ are i.i.d. random variables which satisfy condition \eqref{power.law} for a fixed $\beta >0$. 

\smallskip

Throughout the paper we denote by $(\Omega, \F , \P)$ the probability space associated to the marked point process $(\Phi, \rr)$, i.e. the joint process of the centres and radii
distributed as above. We refer to our previous paper  \cite{GH19withoutpressure} for a more detailed introduction of marked point processes.

\medskip 

\subsection{Notation} \label{sec:notation}
 For a point process $\Phi$ on $\R^d$ and any bounded set $E \subset \R^d$, we define the random variables
\begin{equation} 	\label{Number.function}
\begin{aligned}
	\Phi(E)&:= \Phi \cap E, && \Phi^\eps(E):= \Phi \cap \left(\frac 1 \eps E \right), \\
  N(E) &:= \# (\Phi (E)), && N^{\eps}(E) := \# (\Phi^\eps(E)).
\end{aligned}
\end{equation}
For $\eta > 0$, we denote by  $\Phi_\eta$ a thinning for the process $\Phi$ obtained as 
\begin{align}\label{thinned.process}
\Phi_\eta(\omega):= \{ x \in \Phi(\omega) \,  \colon \, \min_{y \in \Phi(\omega), \atop y \neq x} | x- y| \geq \eta \},
\end{align}
i.e. the points of $\Phi(\omega)$ whose minimal distance from the other points is at least $\eta$. Given the process $\Phi_\eta$, we set $\Phi_\eta(E)$, 
$ \Phi_\eta^\eps(E)$, $N_\eta(E)$ and $N_\eta^\eps(E)$ for the analogues for  $\Phi_\eta$ of the random variables defined in \eqref{Number.function}.

\smallskip

 For a bounded and measurable set $E \subset \Rd$ and any $1\leq p < + \infty$, we denote 
 \begin{align}
 L^p_0(E):= \{ f \in L^p(E)  \ \colon \ \int_E f = 0 \}.
 \end{align}
As in \cite{GH19withoutpressure}, we identify any $v \in H^1_0(D^\eps)$ with the function $\bar v \in H^1_0(D)$ obtained by trivially extending $v$ in $H^\eps$.

\smallskip

Throughout the proofs in this paper, we write $a \lesssim b$ whenever $a \leq C b$ for a constant $C=C(d,\beta)$ depending only on the dimension $d$ and $\beta$ from assumption
\eqref{power.law}. Moreover, when no ambiguity occurs, we use a scalar notation also for vector fields and 
vector-valued function spaces, i.e. we write for instance $C^\infty_0(D), H^1(\Rd), L^p(\Rd)$ instead of $C^\infty_0(D; \Rd), H^1(\Rd ;\Rd), L^p(\Rd;\Rd)$. Finally, given a domain $D \subset \Rd$ and a parameter $ r> 0$, we define
\begin{align}\label{r.neighbourhood}
D_r:= \{ x \in D \, \colon \, \mathop{dist}(x, \partial D) > r \}.
\end{align}

\subsection{Main result} Let $(\Phi, \rr)$ be a marked point process as above, and let $H^\varepsilon$ be defined as in \eqref{holes}. 
%
%

The main result for the pressure, which we obtain in this paper is the following.

\begin{thm} \label{t.pressure}
Let $(u_\eps, p_\eps) \in H^1_0(D^\eps; \Rd) \times L^2_0(D^\eps; \R)$ solve \eqref{Stokes} and let $(\uh, \ph) \in H^1_0(D; \Rd) \times L^2_0(D; \R)$ be the solution of the homogenized
 problem \eqref{Brinkman}. Then, for $\P$-almost every $\omega \in \Omega$ there exists a family of sets $E^\eps \subset \Rd$ and a sequence $r_\eps \to 0$ with the following properties:
\begin{enumerate}[label=(\roman*)]
 \item \label{it:E_eps} It holds $H^\eps \subset E^\eps$ and for $\eps \downarrow 0^+$
	\begin{align}
	 \operatorname{Cap}(E^\eps \backslash H^\eps) \to 0,
	\end{align}
	where $\operatorname{Cap}$ denotes the harmonic capacity in $\Rd$.
 \item Let  $D_{r_\eps}$ be as defined in \eqref{r.neighbourhood}. Then, the modification of the pressure 
	\begin{align}\label{p.tilde}
	 \tilde p_\eps = \begin{cases}
	  p_\eps - \fint_{D_{r_\eps} \backslash E^\eps} p_\eps \ \ \ \ &\text{ in $D_{r_\eps} \backslash E^\eps$}\\
	  0 \ \ \ \ &\text{ in $(D\backslash D_{r_\eps}) \cup E^\eps$}
	  \end{cases}
	\end{align}
	 satisfies for all $q < \frac{d}{d-1}$
\begin{align}
 \tilde p_\eps \rightharpoonup \ph \ \ \ \ \text{in $L^{q}_0(D ; \R)$.}
\end{align}
\end{enumerate}
\end{thm}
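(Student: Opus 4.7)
Following the roadmap outlined in the introduction, I would (i) construct a covering $E^\eps$ of $H^\eps$ with vanishing capacity excess on which a uniform Bogovski operator on $D\setminus E^\eps$ is available, (ii) deduce by duality an $\eps$-uniform $L^q$-bound on $\tilde p_\eps$ for $q<d/(d-1)$, and (iii) identify the weak limit with $\ph$ by passing to the limit in the Stokes equation using the already established convergence $u_\eps\wto\uh$ from \cite{GH19withoutpressure}.

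\textit{Construction of $E^\eps$ and the Bogovski bound.} I would define $E^\eps$ by iteratively grouping balls of $H^\eps$ whose mildly enlarged versions intersect, organized in generations indexed by size scales, so that around every resulting cluster one finds nested annular shells with bounded ratio of radii. On such shells the Bogovski norm is scale-invariant, and glueing through a finite number of generations yields an operator on $D\setminus E^\eps$ with $\eps$-uniform norm; this is the content of Lemma \ref{lem:Bogovski}. The capacity estimate $\operatorname{Cap}(E^\eps\setminus H^\eps)\to 0$ should follow from $\operatorname{Cap}(B_R)\lesssim R^{d-2}$ combined with the a.s.\ local summability $\sum_i \rho_i^{d-2+\beta}<\infty$, which is guaranteed by \eqref{power.law} via the strong law of large numbers on bounded subsets. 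The scale $r_\eps\to 0$ is chosen so that $E^\eps\Subset D_{r_\eps}$ on the $\P$-full measure event.

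\textit{Duality bound.} Fix $q\in[1,d/(d-1))$ and let $q'>d$ be its conjugate. Given $g\in L^{q'}_0(D)$, set $A_\eps:=D_{r_\eps}\setminus E^\eps$ and $\bar g:=\fint_{A_\eps} g$, and use Lemma \ref{lem:Bogovski} to produce $v\in W^{1,q'}_0(A_\eps)$ with $\dv v=g-\bar g$ and $\|\nabla v\|_{L^{q'}}\lesssim\|g\|_{L^{q'}}$. Extending $v$ by zero, it vanishes on $E^\eps\supset H^\eps$ and therefore belongs to $H^1_0(D^\eps)$. Testing \eqref{Stokes} with $v$, and using that $c_\eps(g-\bar g)$ integrates trivially to zero on $A_\eps$, one obtains
\[
\int_D \tilde p_\eps\, g \;=\; \int_{A_\eps}(p_\eps-c_\eps)(g-\bar g) \;=\; \int_{D^\eps}\nabla u_\eps:\nabla v \;-\; \int_{D^\eps} f\cdot v.
\]
Since $D$ is bounded and $q'>2$, $\|\nabla v\|_{L^2}\lesssim\|\nabla v\|_{L^{q'}}\lesssim\|g\|_{L^{q'}}$; combined with the standard energy estimate $\|\nabla u_\eps\|_{L^2}\lesssim 1$, duality yields $\|\tilde p_\eps\|_{L^q}\lesssim 1$ uniformly in $\eps$. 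Hence $\tilde p_\eps\wto p^\ast$ in $L^q_0(D)$ along a subsequence.

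\textit{Identification and main obstacle.} To show $p^\ast=\ph$, I would test \eqref{Stokes} with oscillating test functions $\Phi_\eps=(1-w_\eps)\Phi$, $\Phi\in C^\infty_c(D;\Rd)$, where $w_\eps$ is a Cioranescu--Murat/Allaire corrector satisfying $w_\eps=1$ on $H^\eps$, $w_\eps\to 0$ weakly in $H^1$, and producing the strange term $\mu$ as in \eqref{strange.term.Stokes}. A crucial point is to arrange $\supp\nabla w_\eps\subset E^\eps$, which is consistent with $\operatorname{Cap}(E^\eps\setminus H^\eps)\to 0$; the pressure term then factors through $\tilde p_\eps$ and is controlled by the uniform $L^q$ bound. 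Passing to the limit in $\int\nabla u_\eps:\nabla\Phi_\eps-\int p_\eps\dv\Phi_\eps=\int f\cdot\Phi_\eps$ and matching with the weak Brinkman formulation for $(\uh,\ph)$ forces $p^\ast-\ph$ to be constant, hence zero by the common zero-mean constraint. The main obstacle is the simultaneous achievement of $\operatorname{Cap}(E^\eps\setminus H^\eps)\to 0$ and the uniform Bogovski norm (Lemma \ref{lem:Bogovski}): under \eqref{power.law}, arbitrarily many balls of widely different sizes may overlap, and the hierarchical cluster construction must be delicate; the strict positivity $\beta>0$ is precisely what prevents clusters too large to be amenable to a uniform Bogovski estimate.
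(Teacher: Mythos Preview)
Your first two steps match the paper's approach closely: the Bogovski construction of Lemma~\ref{lem:Bogovski} on $D\setminus E^\eps$ and the duality argument giving the uniform $L^q$ bound on $\tilde p_\eps$ are exactly what the authors do.

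The genuine gap is in your identification step. You propose test functions $\Phi_\eps=(1-w_\eps)\Phi$ with a \emph{scalar} Cioranescu--Murat corrector and the requirement $\supp\nabla w_\eps\subset E^\eps$. This fails for two reasons. First, the requirement is incompatible with $\operatorname{Cap}(E^\eps\setminus H^\eps)\to 0$: for the good holes $H_g^\eps$ the set $E^\eps$ coincides with $H_g^\eps$ (there is no enlargement there), so a function equal to $1$ on $H_g^\eps$ and $0$ outside $E^\eps$ cannot even exist in $H^1$; more generally, forcing the transition layer into a set of vanishing capacity excess drives $\|\nabla w_\eps\|_{L^2}$ to infinity, destroying the mechanism that produces the Brinkman term. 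Second, and more importantly, even if such a $w_\eps$ existed, the pressure term would \emph{not} factor through $\tilde p_\eps$: one has $\dv\Phi_\eps=(1-w_\eps)\dv\Phi-\nabla w_\eps\cdot\Phi$, and the piece $\int p_\eps\,\nabla w_\eps\cdot\Phi$ is supported precisely in $E^\eps\setminus H^\eps$, where $\tilde p_\eps=0$ by definition but the original $p_\eps$ is completely uncontrolled.

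The paper resolves this by using \emph{vector-valued, divergence-free} oscillating test functions $w_k^\eps\in H^1(D;\Rd)$ (Lemma~\ref{l.oscillating.test.functions}), obtained essentially as $R_\eps e_k$ from the reduction operator of \cite{GH19withoutpressure}. These satisfy $w_k^\eps=0$ on $E^\eps$ and $\dv w_k^\eps=0$. The test function $\sum_k\phi_k w_k^\eps$ then has divergence $\sum_k w_k^\eps\cdot\nabla\phi_k$, which vanishes on $E^\eps$; hence the pressure integral is supported in $D_{r_\eps}\setminus E^\eps$ and genuinely equals $\int\tilde p_\eps\,w_k^\eps\cdot\nabla\phi_k$. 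This is the missing idea you need: the divergence-free constraint must be built into the corrector itself, not handled by a scalar cutoff.
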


\subsection{Proof of the main result}\label{sec:proof.main}

The proof of Theorem \ref{t.pressure} relies on the following lemma, which is a variant of the standard Bogovski lemma to the set $D \backslash E^\eps$, with $E_\eps$ as 
in Theorem \ref{t.pressure}. This result allows to obtain estimates for the pressure in the Stokes equations \eqref{Stokes}. A priori, any such estimate highly depends on the exact geometry of the set considered.  In the following result, the specific construction of the sets $E^\eps$ allows to obtain estimates that are almost surely uniform in $\eps$.  

\medskip

\begin{lem}[\bf Estimate on the Bogovski operator in $D\setminus E^\eps$]
	\label{lem:Bogovski}
	Let $r \in (0,1)$ and $D_r$ as in \eqref{r.neighbourhood}. For $\P$-almost every $\omega \in \Omega$ there exists a family of sets $E^\eps \subset \Rd$ satisfying \ref{it:E_eps} of Theorem \ref{t.pressure}  
	and  $\eps_0= \eps_0(\omega, r) > 0$ such that for all $\eps < \eps_0$ the following holds:
	 For any $g \in L^{q}_0(D_r \setminus E^\eps)$, $q > d$, let us consider its trivial extension to  $D \setminus E^\eps$. Then, there exists $v \in H^1_0(D\setminus E^\eps)$ such that 
	\begin{equation}
	\label{Bogovski}
	\begin{aligned}
		\dv v &= g \ \ \ \text{ in $D \setminus E^\eps$} \\
		\|v\|_{H^1( D \setminus E^\eps)} & \leq C(d,\beta,q) \|g\|_{L^{q}(D_r \setminus E^\eps)}.
	\end{aligned}
	\end{equation}
\end{lem}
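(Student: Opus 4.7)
The plan is to construct $E^\eps$ as a disjoint union of balls enclosing clusters of overlapping (or nearly touching) holes, each surrounded by a scale-comparable safety annulus inside which a local Bogovski correction can be performed. The Bogovski operator on $D\setminus E^\eps$ is then obtained in three moves: solve a global Bogovski problem on $D$, localize the error near each cluster via a cutoff, and correct the divergence in each annulus with a \emph{scale-invariant} Bogovski operator.

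For the clustering I would inductively group the holes $B_{\aeps\rho_i}(\eps z_i)$ by a greedy rule in which two balls merge into the same cluster whenever their distance is smaller than a fixed multiple of the maximum of the two radii. Each cluster $\mathcal{C}_k$ is enclosed in a concentric ball $B_k^\eps=B(x_k^\eps,s_k^\eps)$ with $B_k^\eps\supset \mathcal{C}_k$, and the $s_k^\eps$ are inflated slightly so that the doubled balls $B(x_k^\eps,4 s_k^\eps)$ are pairwise disjoint and contained in $D$. Setting $E^\eps=\bigcup_k B_k^\eps$ and $A_k^\eps=B(x_k^\eps,2 s_k^\eps)\setminus B_k^\eps$, I would establish the two quantitative properties
\[
\operatorname{Cap}(E^\eps\setminus H^\eps)\to 0,\qquad \sum_k (s_k^\eps)^{d-2}\le C(\omega)<+\infty\ \ \text{for }\eps<\eps_0(\omega),
\]
via a strong law of large numbers argument on the marked process $(\Phi,\rr)$ in the spirit of \cite[Sec.~5]{GH19withoutpressure}; the surplus $\beta>0$ in \eqref{power.law} is precisely what absorbs the radius inflation produced by merging.

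Given $g\in L^q_0(D_r\setminus E^\eps)$ with $q>d$, I extend $g$ by zero to $D$ (preserving zero mean) and solve a global Bogovski problem on the star-shaped Lipschitz domain $D$ to get $w\in W^{1,q}_0(D)$ with $\dv w=g$ and $\|w\|_{W^{1,q}(D)}\lesssim \|g\|_{L^q}$. Morrey embedding (using $q>d$) then yields $\|w\|_{L^\infty(D)}\lesssim\|g\|_{L^q}$. For each $k$ I choose a cutoff $\phi_k$ with $\phi_k=0$ on $\partial B_k^\eps$, $\phi_k=1$ on $\partial B(x_k^\eps,2 s_k^\eps)$ and $|\nabla \phi_k|\lesssim(s_k^\eps)^{-1}$. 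A short integration by parts, using $\dv w=g$ in $D$ and $g\equiv 0$ on $B_k^\eps\subset E^\eps$, shows
\[
\int_{A_k^\eps} h_k = 0,\qquad h_k:=(1-\phi_k)g-w\cdot\nabla\phi_k,
\]
so the classical Bogovski operator on the annulus $A_k^\eps$ (which is the dilation of the fixed reference annulus $B_2\setminus B_1$, hence has scale-invariant operator norm) delivers $\tilde v_k\in H^1_0(A_k^\eps)$ with $\dv\tilde v_k=h_k$ and $\|\nabla\tilde v_k\|_{L^2(A_k^\eps)}\lesssim\|h_k\|_{L^2(A_k^\eps)}$. The glued vector field
\[
v=w\ \text{on }D\setminus\textstyle\bigcup_k B(x_k^\eps,2 s_k^\eps),\quad v=\phi_k w+\tilde v_k\ \text{on }A_k^\eps,\quad v=0\ \text{on }E^\eps
\]
then lies in $H^1_0(D\setminus E^\eps)$ and satisfies $\dv v=g$ on $D\setminus E^\eps$. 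The only contribution not controlled immediately by $\|g\|_{L^q}$ is $\sum_k\|w\,\nabla\phi_k\|_{L^2(A_k^\eps)}^2$, for which the $L^\infty$ bound on $w$, together with $|A_k^\eps|\lesssim(s_k^\eps)^d$ and $|\nabla\phi_k|\lesssim(s_k^\eps)^{-1}$, gives
\[
\sum_k\|w\,\nabla\phi_k\|_{L^2(A_k^\eps)}^2\lesssim\|g\|_{L^q}^2\sum_k(s_k^\eps)^{d-2}\lesssim\|g\|_{L^q}^2.
\]
The same bound controls $\|h_k\|_{L^2}$ and hence $\|\nabla\tilde v_k\|_{L^2}$, closing the desired estimate.

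The main obstacle is the construction of $E^\eps$ in the first step: the conditions $\operatorname{Cap}(E^\eps\setminus H^\eps)\to 0$ and $\sum_k(s_k^\eps)^{d-2}\lesssim 1$ pull against each other, since a generous safety margin is helpful for the Bogovski estimate but costly for the capacity, while too tight a clustering causes the scale-invariant Bogovski constant on each $A_k^\eps$ to blow up. Tuning the clustering rule so that both bounds hold \emph{almost surely} for small $\eps$ is where the strict inequality $\beta>0$ in \eqref{power.law} really enters, combined with the concentration estimates on the marked Poisson point process developed in \cite{GH19withoutpressure}.
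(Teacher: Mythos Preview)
Your reduction to a global Bogovski plus local annular corrections is clean, and once the geometry is in place the analytic Steps 2--3 are correct. The difference from the paper is structural: the paper does \emph{not} produce an $E^\eps$ that is a disjoint union of balls with pairwise disjoint $4$-dilations. Instead it uses the multi-scale covering $\bar H_b^\eps=\bigcup_k\bigcup_{j\in J_k}B_j$ from \cite{GH19withoutpressure}, in which balls from levels $|k-l|\ge 2$ are allowed to overlap, and corrects $v_0$ \emph{iteratively} through the chain of sets $E_{k_{\max}+1}\supset\cdots\supset E_{-3}$, solving at each step independent problems only on the annuli of a single level $J_k$. The price is that the sets $E_k$ are not nested, which forces a careful choice of the extension $\bar g$ inside $E^\eps$ (your approach avoids this entirely by setting $\bar g=0$ on $E^\eps$). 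The gain is that the paper never needs a one-shot disjoint-annuli clustering.

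The gap in your proposal is precisely the clustering step you flag as the ``main obstacle''. A greedy rule that merges two balls whenever their distance is below a fixed multiple of the larger radius can produce chains: $N$ balls of comparable radius $r$, each within $Cr$ of the next, merge into a single cluster with $s_k\sim Nr$, so $(s_k)^{d-2}\sim N^{d-2}r^{d-2}$ while $\sum r_i^{d-2}\sim N r^{d-2}$; for $d>3$ this is uncontrolled unless you already know $N$ is bounded. The paper's hypothesis \eqref{power.law} does rule out long chains of \emph{comparable} radii, but establishing this is exactly the content of the level decomposition $J_{-3},\dots,J_{k_{\max}}$ in \cite[Lemma~3.2]{GH19withoutpressure}, and that lemma outputs the multi-scale structure, not a disjoint family with disjoint dilations. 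Moreover, enforcing that the $4$-dilated enclosing balls be pairwise disjoint may itself trigger further merging and inflate the $s_k$'s in a way that is not obviously controlled by $\sum\rho_i^{d-2}$. So the probabilistic input you would need is strictly stronger than what \cite{GH19withoutpressure} provides, and you have not shown it holds. Absent that, the argument does not close; the paper's iterative scheme is designed precisely to bypass this requirement.
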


The previous result allows to construct suitable oscillating test functions $\{w^\eps_i\}_{i=1}^d \subset H^1(D; \Rd)$ in the same spirit of 
\cite{AllaireARMA1990a}:

\begin{lem}\label{l.oscillating.test.functions}
Let $k=1, \dots, d$ be fixed. Then, for almost every $\omega \in \Omega$ and any $\eps \leq \eps_0(\omega)$ there exists a 
set $E^\eps \subset \Rd$ such that $E^\eps \supset H^\eps$ and for $\eps \downarrow 0^+$
	\begin{align}
		\label{Cap.E^eps}
	 \operatorname{Cap}(E^\eps \backslash H^\eps) \to 0,
	\end{align}
Moreover, for all $k=1, \cdots, d$, there exist $w^{\eps}_k \in H^1(D; \Rd) \cap L^\infty(D; \Rd)$, $k = 1, \cdots d$,  such that
\begin{enumerate}[label=(H\arabic*)]
\item \label{H1} $w^\eps_k = 0$ on $E^\eps$ and $\nabla \cdot w^\eps_k =0$ in $D$;
\item \label{H2} $w^\eps_k \rightharpoonup e_k$ in $H^1(D)$ and $w^\eps_k \rightarrow e_k $ in $L^p(D)$ for any $1 \leq p< +\infty$;
\item \label{H3} For any $\phi \in C^\infty_0(D)$ and sequence $v_\eps \rightharpoonup v$ in $H^1_0(D; \Rd)$ with $\nabla \cdot v_\eps =0$ on $D$ we have
\begin{align}
\lim_{\eps \downarrow 0^+} \int \phi \nabla w^\eps_k : \nabla v_\eps = \int \phi e_k \cdot \mu v,
\end{align}
with $\mu$ defined in Theorem \ref{t.pressure}.
\end{enumerate}
\end{lem}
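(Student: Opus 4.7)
I would follow the Cioranescu-Murat / Allaire oscillating test function method, with two adaptations to the present setting: (a) the construction is performed not around each individual hole but around clusters of overlapping holes, using precisely the set $E^\eps$ supplied by Lemma \ref{lem:Bogovski}; (b) the divergence-free condition \ref{H1} is enforced via the bounded Bogovski operator on $D \setminus E^\eps$ provided by the same lemma.

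\textbf{Construction of the correctors.} For each connected component of $E^\eps$, centred at some $y_j$, I would work on a mesoscopic annular region $B_\ell(y_j)\setminus E^\eps_j$ with $\aeps \ll \ell \ll \eps$, and define a local Stokes-type corrector $v_{k,j}^\eps$ (say, a cutoff of the Stokeslet associated with the cluster, or the solution of a Stokes problem on the annulus) that equals $e_k$ near $\partial B_\ell(y_j)$ and vanishes on $\partial E^\eps_j$. Outside the balls $B_\ell(y_j)$ I set $\tilde w^\eps_k = e_k$; inside each ball $\tilde w^\eps_k = v_{k,j}^\eps$. The clusters of $E^\eps$ are well-separated by construction of $E^\eps$, so the $B_\ell(y_j)$ are pairwise disjoint for $\eps$ small and $\tilde w^\eps_k \in H^1 \cap L^\infty$ with $\tilde w^\eps_k = 0$ on $E^\eps$. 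Finally, I apply Lemma \ref{lem:Bogovski} to obtain $z^\eps \in H^1_0(D\setminus E^\eps)$ with $\dv z^\eps = -\dv \tilde w^\eps_k$, and set $w^\eps_k := \tilde w^\eps_k + z^\eps$, which is divergence-free in $D$ and still vanishes on $E^\eps$.

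\textbf{Verification of \ref{H1} and \ref{H2}.} Property \ref{H1} is built into the construction and the capacity estimate \eqref{Cap.E^eps} is inherited from Lemma \ref{lem:Bogovski}. For \ref{H2}, the Dirichlet energy of the local corrector around a single ball of radius $\aeps\rho_i$ scales as $\eps^d \rho_i^{d-2}$ (Newtonian capacity in $d \ge 3$). Summing over $\Phi \cap \eps^{-1}D$ and invoking the strong law of large numbers together with the moment assumption \eqref{power.law} yields the uniform $H^1$-bound for $w^\eps_k - e_k$, and the Bogovski correction $z^\eps$ is controlled by the same lemma. Since $w^\eps_k - e_k$ is supported in a set of vanishing Lebesgue measure, Rellich's theorem combined with the $L^\infty$ bound provides strong $L^p$ convergence to $e_k$.

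\textbf{Property \ref{H3} and main obstacle.} This is the hard step. The plan is to integrate by parts in $\int \phi \nabla w^\eps_k : \nabla v_\eps$ and exploit $\dv v_\eps = 0$ together with the Stokes equation satisfied by $v_{k,j}^\eps$ on each annulus, reducing the integral to a sum over clusters of boundary stress terms paired with $\phi(y_j) v_\eps(y_j)$. For a single isolated ball of radius $\aeps\rho_i$, the explicit Stokes corrector gives a contribution of order $C_d \eps^d \rho_i^{d-2}\phi(y_i)\, e_k \cdot v_\eps(y_i)$, and summing over $i$ produces a Riemann-sum approximation for the random measure $\eps^d \sum_i \rho_i^{d-2} \delta_{\eps z_i}$, which by the strong law of large numbers for the marked Poisson process converges almost surely to $\lambda \langle \rho^{d-2}\rangle \,\d x$; combined with the strong $L^2$ convergence $v_\eps \to v$ this yields the strange term $\int \phi\, e_k \cdot \mu v$. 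The principal difficulty is controlling clusters of many overlapping balls of very different radii: it is here that both the specific construction of $E^\eps$ in Lemma \ref{lem:Bogovski} and the concentration/thinning estimates for the marked Poisson process from \cite{GH19withoutpressure} must be brought to bear, in order to show that each cluster's boundary contribution equals, up to an asymptotically negligible error, the sum of the individual ball contributions — which is precisely the role of the moment condition \eqref{power.law} and the reason why it forbids large clusters of balls of comparable size.
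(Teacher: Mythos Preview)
Your proposal rebuilds the oscillating test functions from scratch, whereas the paper's proof is essentially one line: set $w^\eps_k := R_\eps(e_k)$, where $R_\eps$ is the reduction operator already constructed in \cite[Lemma~2.5]{GH19withoutpressure}. The only observation needed is that the construction of $R_\eps$ does not actually use the compact support of its argument, and that by inspection $R_\eps\varphi$ vanishes not just on $H^\eps$ but on the larger set $E^\eps$ defined in \eqref{E^eps}. Properties \ref{H1}--\ref{H3} then follow immediately from the properties of $R_\eps$ already established in \cite{GH19withoutpressure}. In effect you are proposing to re-prove \cite[Lemma~2.5]{GH19withoutpressure}, which is unnecessary here.

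Beyond the redundancy, your concrete construction has a gap. You assume the connected components of $E^\eps$ can be enclosed in pairwise disjoint balls $B_\ell(y_j)$ at a \emph{single} mesoscale $\aeps \ll \ell \ll \eps$. This is precisely what fails in the present setting: by \eqref{eq:order.J_k} the balls in $\bar H^\eps_b$ range over $\km$ many dyadic scales between $\eps^{1+2\delta}$ and $\eps^{2d\delta}$, and balls of very different sizes may overlap (cf.\ the discussion in Subsection~\ref{sub:ideas}). The construction of $R_\eps$ in \cite{GH19withoutpressure} handles this by an \emph{iterative} procedure over the index $k = \km, \ldots, -3$, solving Stokes problems on the annuli $A_j$, $z_j \in J_k$, at each step; no single mesoscale works. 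Relatedly, invoking Lemma~\ref{lem:Bogovski} to globally correct the divergence is somewhat circular, since the proof of that lemma uses the very same iterative annular machinery, and in any case you would need to check that $\dv \tilde w^\eps_k \in L^q_0(D_r\setminus E^\eps)$ for some $q>d$ with a bound that does not blow up --- this is not automatic for Stokes correctors on thin annuli.
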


\begin{proof}[Proof of Theorem \ref{t.pressure}]
	Testing \eqref{Stokes} with the solution $u_\eps$ itself yields the energy estimate
	\begin{align}
		\|u_\eps\|_{H_0^1(D)} \leq \|f\|_{H^{-1}(D)}.
	\end{align}
	In particular, $u_\eps \wto u^\ast$ in $H^1_0(D)$ for a subsequences and some $u^\ast \in H^1_0(D)$ with $\dv u^\ast = 0$.

	Let $\{ r_n\}_{n \in \N} \subset \R_+$ be such that $r_n \to 1$. Let us denote by $\eps_{0,n}> 0$ the minimum between the (random values) $\eps_0$ in the statements of Lemma \ref{lem:Bogovski} with $r=r_n$ and Lemma \ref{l.oscillating.test.functions}.
	By construction, we may assume that $\eps_{n + 1} \leq \eps_n$ for all $n \in \N$; we thus define
	$r_\eps :=r_n$, for $\eps_{n+1} \leq \eps < \eps_n$.
	Let $q > d$ and $g \in L^{q}_0(D_{r_\eps} \setminus E^\eps)$ and let $v \in H^1_0(D\setminus E^\eps)$ satisfy \eqref{Bogovski}.
	Then, testing \eqref{Stokes} with $v$ yields
	\begin{align} \label{eq:test.w_k}
		\int_{D_{r_\eps} \setminus E^\eps} p_\eps g = \int_{D \setminus E^\eps} p_\eps \dv v &= (\nabla u_\eps, \nabla v)_{L^2(D^\eps)} + \langle f, v \rangle_{H^{-1}, H^1} \\
		&\leq 2 \|v\|_{H^1} \|f\|_{H^{-1}} \leq  C(d,\beta,q) \|g\|_{L^{q}} \|f\|_{H^{-1}}.
	\end{align}
	Since $g \in L^{q}(D_{r_\eps}\setminus E^\eps)$ was arbitrary, this implies that, up to a subsequence, $\tilde p_\eps$ defined in \eqref{p.tilde} converges to $p^\ast$ weakly
	in $L^{q'}(D)$, where $q'$ is the H\"older conjugate of $q$. It remains to show that $(p^\ast,u^\ast)= (\ph,\uh)$ as the unique weak solutions to
	\eqref{Brinkman}. By uniqueness of the limit, the convergence then holds for the whole family $\eps \downarrow 0^+$.

	To do so, we fix any smooth vector field $\phi \in C^\infty_0(D)$. Then $\supp \phi \subset D_{r_\eps}$ for $\eps$ sufficiently small (depending on $\omega$ and $\phi$).  We test the equation \eqref{Stokes} for $u_\eps$ with the admissible test function $\sum_{k=1}^d w_k^\eps \phi_k$. This yields
	\begin{align}
		\sum_{k=1}^d \int \nabla u_\eps : \nabla (w_k \phi_k) - \sum_{k=1}^d \int \nabla \cdot (w^\eps_k \phi_k) p_\eps 
		=  \sum_{k=1}^d \langle  w^\eps_k \phi_k,f \rangle_{H^1,H^{-1}}.
	\end{align}		
	Convergence of the right-hand side follows immediately from \ref{H2} of Lemma \ref{l.oscillating.test.functions}.
	For the first term on the left-hand side, we observe that \ref{H2} implies $w_k \to w$ strongly in $L^2(D)$. Thus, by \ref{H2} and \ref{H3}
	\begin{align}
		\lim_{\eps \downarrow 0^+} \int \nabla u_\eps : \nabla (w_k \phi_k) &=  
		\lim_{\eps \downarrow 0^+} \int \nabla u_\eps : w_k \otimes \nabla \phi_k + \int \nabla u_\eps : \nabla w_k  \phi_k \\
		&=\int \nabla u^\ast : e_k \otimes \nabla \phi_k + \int \phi_k e_k \cdot\mu u^\ast.
	\end{align}
 
We turn to the second term on the left-hand side of \eqref{eq:test.w_k}. By \ref{H1} of Lemma \ref{l.oscillating.test.functions} each product $w^\eps_k \phi_k$ is supported in 
	$D_{r_\eps} \backslash E_\eps$ and therefore
	\begin{align}
	\int \nabla \cdot (w^\eps_k \phi_k) p_\eps =  \int \nabla \cdot (w^\eps_k \phi_k) \tilde p_\eps = \int w^\eps_k \cdot \nabla \phi_k \tilde p_\eps,
	\end{align}
where in the last identity we used Leibniz rule and the divergence-free condition for $w_k^\eps$ in \ref{H1} of Lemma \ref{l.oscillating.test.functions}.
It now remains to combine the convergence of $\tilde p_\eps$ with \ref{H2} of Lemma \ref{l.oscillating.test.functions} and send $\eps \downarrow 0^+$ in the
right-hand side above. This yields
\begin{align}
	\lim_{\eps \downarrow 0^+}  \int \nabla \cdot (w^\eps_k \phi_k) p_\eps = \int e_k \cdot \nabla \phi_k p^\ast.
\end{align}
Combining the above identities yields
\begin{align}
	\int \nabla u^\ast : \nabla \phi + \int \phi \cdot\mu u^\ast - \int  \nabla \cdot \phi p^\ast
	= \langle   \phi,f \rangle_{H^1,H^{-1}},
\end{align}
which is the weak formulation of \eqref{Brinkman}.
\end{proof}

\begin{rem}\label{rem:alternative}
We point out the the argument used in Theorem \ref{t.pressure} allows to deduce both the convergence of the velocities $u_\eps$ and the pressures $p_\eps$, i.e. both \cite[Theorem 2.1]{GH19withoutpressure}  and Theorem \ref{t.pressure} of the current paper. We also remark that Lemma \ref{l.oscillating.test.functions} is an adaptation of  \cite[Lemma 2.5]{GH19withoutpressure} to the case when the reduction operator $R_\eps$ defined in that lemma (see also Subsection \ref{sub:ideas}) is applied to the functions $\varphi = e_i$, $i=1, \cdots d$. Here, $e_k$, $k=1, \cdots, d$ are the canonical basis vectors of $\Rd$. We stress that we may not immediately use \cite[Lemma 2.5]{GH19withoutpressure} as the vectors $e_i$ are not in $C^\infty_0(D; \Rd)$. 
\end{rem}


\section{Proof of Lemma \ref{lem:Bogovski}} \label{s.pressure}

\subsection{Strategy for the proof of Lemma \ref{lem:Bogovski}}\label{sub:ideas}
As described in \cite{GH19withoutpressure}, the main challenge in our problem is related to the geometry of the holes $H^\eps$. The homogenization result for the velocities
$u_\eps \in H^1_0(D^\eps)$ of \cite[Theorem 2.1]{GH19withoutpressure}, relies indeed on the construction of a ``reduction'' operator
$R_\eps : \{ v \in C_0^{\infty}(D) \, \colon \, \dv v = 0 \} \to H^1_0(D^\eps ; \R^d)$ that transforms smooth vector fields on $D$ into admissible test functions for
\eqref{Stokes} and preserves the divergence-free condition (see \cite[Lemma 2.5]{GH19withoutpressure}). In order to have good bounds for $R_\eps$ that are deterministic and independent from $\eps$, we need to construct $R_\eps$
in such a way that it depends on the geometry of the set $H^\eps$ in a uniform way (in $\eps$ and $\omega \in \Omega$). It is easy to imagine that the main challenge is
given by the subset of $H^\eps$ made of holes that overlap giving rise to clusters of holes with various possible geometries.  

\medskip

We tackle this issue by constructing a covering $\bar H^\eps$ of $H^\eps$ that allows us to construct $R^\eps$ by solving a finite number of iterated boundary value problems.

\medskip

Roughly speaking, the covering $\bar H^\eps$ is obtained by selecting only certain balls of $H^\eps$ and dilating them by a factor $\lambda^\eps_j \leq \Lambda$, with $\Lambda$ 
finite and deterministic. In other words, by considering the set 
$$
\bar H^\eps = \bigcup_{z_j \in J^\eps} B_{\eps^{\frac{d}{d-2}} \lambda^\eps_j \rho_j}(\eps z_j), \ \ \ \ J^\eps \subset \Phi(\frac{1}{\eps}D).
$$

The main property of this covering is the following: we may find a suitable finite partition  $J_1, \cdots, J_{\km}$ of $J$ such that 
$\bar H^\eps:= \bigcup_{k \in \N} \bigcup_{j \in J_k} B_{\eps^{\frac{d}{d-2}}\lambda^\eps_j \rho_j}(\eps z_j)$ and, given a vector field $\varphi \in C^\infty_0(D; \R^d)$ having
zero divergence, define $R_\eps \varphi$ by iteratively correcting $\varphi$ in such a way that it vanishes on the sets
$\bigcup_{z_j \in J_k} B_{\eps^{\frac{d}{d-2}} \lambda^\eps_j \rho_j}(\eps z_j)$, $k=1, \cdots, \km$ and it preserves the divergence-free condition. In other words, we define 
$\varphi^{(1)}, \cdots \varphi^{(\km)}$ in such a way that each $\varphi^{(k)}$ is obtained from $\varphi^{(k-1)}$ by adding suitable corrections in order for $\varphi^{(k)}$ to vanish on 
$\bigcup_{z_j \in J_k} B_{\eps^{\frac{d}{d-2}} \lambda^\eps_j \rho_j}(\eps z_j)$.

\medskip

On the one hand, the balls $B_{\eps^{\frac{d}{d-2}} \lambda^\eps_j \rho_j}(\eps z_j)$ belonging to the same collection $J_k$ are mutually disjoint, even if dilated by a certain
factor $\theta$; this allows us to define the corrections at each step by solving independent Stokes problems in annuli. On the other hand, balls belonging to different
subcollections $J_k \neq J_i$ may overlap. However, their intersection is disjoint from the set of ``real'' holes $H^\eps$. These conditions imply that if $E_k$ and
$E_{k-1}$ are the sets where $\varphi^{(k)}$ and $\varphi^{(k-1)}$ vanish, respectively, then $E_k$ and $E_{k-1}$ are \textit{not nested}. Nonetheless, the set $E_{\km}$ obtained in 
the final iteration does contain the full set of holes $H^\eps$, i.e. the final function $\varphi^{(\km)}$ \textit{does} vanish on $H^\eps$. For a more precise discussion, we refer to \cite[Subsection 2.3]{GH19withoutpressure}.

\medskip

The proof of Lemma \ref{lem:Bogovski} relies on an idea similar to the one for $R_\eps$ and the sets $E_{k}$, $k=1 , \cdots, \km$ defined above play a curcial role also
in this proof. In particular, the set $E^\eps$ corresponds to $E^{\km}$, i.e. the set where the operator $R_\eps$ above  satisfies
$R_\eps \varphi = 0$ for all divergence free $\varphi \in C_0^\infty(D)$.

\medskip

In fact, given the function 
$g \in L^{q}_0(D_r \backslash E^\eps)$ we need to define a suitable extension $\bar g$ to the whole set $D$ in such a way that:
\begin{itemize}
\item  There exists a solution $v_0 \in H^1_0(D)$ such that
		\begin{equation}
	\label{v_0.0.stra}
	\begin{aligned}
		\dv v_0 &= \bar g, \\
		\|v_0\|_{H^{1}} &\lesssim \|g\|_{L^{q}(D_r \backslash E^\eps)}.
	\end{aligned}
	\end{equation}
\item We may modify $v_0$ by adding corrections in such a way that we achieve the boundary condition $v=0$ in $E^\eps$. Similarly to what is done in \cite[Lemma 2.5]{GH19withoutpressure}
for $R_\eps$, we add correctors in a recursive way by constructiong functions $v^{(1)}, \cdots, v^{(\km)}$ such that each $v^k$ satisfies $v^k=0$ in $E_k$ and 
$\div v = \bar g$ in $D \setminus E_k$. We remark that the fact that the sets $E_k$ are not nested implies that at each step we have to restore the condition
$\div v =\bar g$ in $E_{k+1}\setminus E_k$. This yields a compatibility condition which determines the extension of $\bar g$. By exploiting the properties of the sets $E_k$ and, in particular of $E^\eps$ (see Lemma \ref{l:E_-3}), we show that such an extension $\bar g$ exists and that the algorithm
for correcting the function $v_0$ is well-defined and satisfies \eqref{Bogovski}. 
\end{itemize}
\subsection{Summary of the geometric properties of $H^\eps$ proved in \cite{GH19withoutpressure}}\label{sub:geometry}
This subsection is devoted to recalling the main geometric results for the set of holes $H^\eps$ obtained \cite{GH19withoutpressure}. 
This allows us to rigorously introduce the set $E^\eps$, together with other auxiliary sets $E_{k}$ (cf. the previous subsection) that play a curcial role in the 
proof of Lemma \ref{lem:Bogovski}. Finally, Lemma \ref{l:E_-3} contains some further properties of the previous sets which were not proven in \cite{GH19withoutpressure}.

\begin{lem}[{\cite[Lemmas 3.1 and 3.2]{GH19withoutpressure}}]\label{l.geometry}
There exists $\delta = \delta(d,\beta) > 0$ such that for almost every $\omega \in \Omega$ and all $\eps \leq \eps_0=\eps_0(\omega)$, 
there exists a partition $H^\eps= H^\eps_g \cup H^\eps_b$ and a set $D^\eps_b \subset \Rd$ such that
$H^\eps_b \subset D^\eps_b$ and
\begin{align}
	\label{safety.layer}
\dist( H^\eps_g ; D^\eps_b) > \eps^{1 + \delta}.			
\end{align}
Furthermore, $H_g^\eps$ is a union of disjoint balls centred in $n^\eps \subset \Phi^\eps(D)$, namely
\begin{equation}\label{good.set}
\begin{aligned}
 H^\eps_g = \bigcup_{z_i \in n^\eps} B_{\aeps \rho_i}(\eps z_i), \ \ \ \ \eps^d \#n^\eps \to \lambda \, |D|, \\
\min_{z_i \neq z_j \in n^\eps} \eps |z_i - z_j| \geq 2 \eps^{1+\frac \delta 2}, \quad \aeps \rho_i \leq  \eps^{1+2\delta}.
\end{aligned}
\end{equation}

Moreover, let $\mathcal I^\eps := \Phi^\eps(D) \backslash n^\eps$, so 
that
\begin{align}
	\label{H^b}
H^\eps_b:= \bigcup_{z_i \in \mathcal \I^\eps} B_{\aeps \rho_i}(\eps z_i).
\end{align}
Then, there exist $ \Lambda(d, \beta)> 0$, $k_{max}= k_{max}(d, \beta)>0$, and sub-collections $J^\eps_k  \subset \mathcal I^\eps$, $-3 \leq k \leq k_{max}$ and constants $\{ \lambda_l^\eps \}_{z_l\in J^\eps} \subset [1, \Lambda]$ such that
 \begin{itemize}
 \item For all  $k=-3, \cdots, k_{max}$ and every $z_j \in J_k^\eps$ 
 	\begin{align}
 		\label{eq:order.J_k}
		\eps^{1 - \delta k} \leq \aeps \rho_i &< \eps^{1 - \delta (k+1)}  \quad \text{for } k \geq -2, \\
		\aeps \rho_i &< \eps^{1 +2\delta} \quad \text{for }k= -3.
 	\end{align}
 \item	Let $J^\eps = \cup_{k=-3}^{k_{max}} J_k^\eps$. Then,
 \begin{align}
		\label{bar.H^b}
	 H_b^\eps \subset \bar H^\eps_b := \bigcup_{z_j \in J^\eps} B_{\lambda_j^\eps \aeps \rho_j}( \eps z_j), \ \ \ \lambda_j^\eps \aeps \rho_j \leq \Lambda \eps^{2d\delta}.
	\end{align}
 \item  For all $k=-2, \cdots, k_{max}$ and every $z_i, z_j \in J_k^\eps \cup J_{k-1}^\eps$, $z_i \neq z_j$
\begin{align}\label{similar.size.apart}
B_{\theta^2 \lambda_i^\eps \aeps \rho_i}(\eps z_i) \cap B_{\theta^2 \lambda_j^\eps \aeps \rho_j}(\eps z_j) = \emptyset.
\end{align}
\item The set $D^\eps_b$ might be chosen as
\begin{align}
\label{D_b}
& D^\eps_b = \bigcup_{z_i \in J^\eps} B_{\theta \aeps \lambda_i^\eps\rho_i}(\eps z_i).
\end{align}
 \end{itemize}

%
\end{lem}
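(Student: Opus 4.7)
Since Lemma \ref{l.geometry} collects two results from our previous work \cite{GH19withoutpressure}, I would only sketch the strategy rather than reproduce the full argument. The idea is to classify the holes by dyadic size ranges and to extract, at each range, a sub-collection of mutually well-separated balls while confining the remaining ones in an exceptional set that is controlled thanks to the moment assumption \eqref{power.law}.

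First, I would fix a small parameter $\delta = \delta(d,\beta) > 0$, to be tuned later, and slice the radii generating $H^\eps$ into the scales described in \eqref{eq:order.J_k}: the ``tiny'' balls with $\aeps \rho_i < \eps^{1+2\delta}$, and, for each $k \geq -2$, the ones with $\aeps \rho_i \in [\eps^{1-\delta k}, \eps^{1-\delta(k+1)})$. Since $\langle \rho^{d-2+\beta}\rangle < +\infty$, a Markov-type estimate for the marked Poisson process $(\Phi,\rr)$ shows that the expected number of points in $\frac{1}{\eps} D$ whose radius falls at scale $k$ decays as $k$ grows; concentration for Poisson functionals combined with a Borel--Cantelli argument along a geometric sub-sequence $\eps = 2^{-n}$ then upgrades this to an almost-sure bound valid for every $\eps \leq \eps_0(\omega)$. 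In particular, there exists a deterministic $\km = \km(d,\beta)$ beyond which no bad scale is populated. For the tiny balls, a Vitali-type pruning, together with the standard lower bound $\geq \eps^{1+\delta/2}$ on pairwise distances between distinct Poisson points holding with overwhelming probability, produces the disjoint collection $H^\eps_g$ with $\eps^d \# n^\eps \to \lambda|D|$, which takes care of \eqref{safety.layer} and \eqref{good.set}.

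The more delicate part concerns $H^\eps_b$ and the construction of the dilation factors $\lambda_j^\eps \in [1,\Lambda]$ achieving \eqref{similar.size.apart}. My plan would be a scale-by-scale greedy selection: within each index set $J_k^\eps$, I would iterate over clusters of overlapping balls of comparable radius, keep one representative per cluster, and inflate it just enough to swallow the others, exactly as in the Vitali-type argument but respecting the intra- and inter-scale separation. The uniform bound $\lambda_j^\eps \leq \Lambda$ comes from an a priori control on how many balls of comparable radius can accumulate near a given centre; this again relies on \eqref{power.law}, applied scale by scale. The scale gap $\eps^{-\delta}$ between consecutive levels ensures that balls from $J_k^\eps$ and $J_{k-1}^\eps$ can be compared directly and separated with the same dilation constant, after the extra factor $\theta^2$ is introduced. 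Taking $D_b^\eps$ as in \eqref{D_b} is then tautological once the $\lambda_j^\eps$ are fixed, and the bound $\lambda_j^\eps \aeps \rho_j \leq \Lambda \eps^{2d\delta}$ follows from the upper end of the scale $\km$.

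The main obstacle I expect is the interplay between the almost-sure statement and the quantitative control that must hold \emph{simultaneously} at every scale and for every $\eps$ small enough. Each of the finitely many scales $k = -3,\dots,\km$ produces its own exceptional event on which the claimed counting and separation estimates might fail, and one has to show that their union still has probability $0$ once restricted to $\eps \le \eps_0(\omega)$. The finiteness of $\km$, itself a direct consequence of \eqref{power.law} (since the expected number of balls with $\aeps \rho_i \gtrsim 1$ decays as a positive power of $\eps$), makes this a finite union and the Borel--Cantelli step along $\eps_n = 2^{-n}$ closes the argument; monotonicity in $\eps$ then transfers the bounds to the full family. All further properties stated in Lemma \ref{l.geometry} are byproducts of these constructions.
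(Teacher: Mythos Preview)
Your sketch is in the right spirit, and you correctly recognise that this lemma is not proved in the present paper at all: it is quoted from \cite[Lemmas~3.1 and~3.2]{GH19withoutpressure}, with the only addition being the remark that \eqref{eq:order.J_k} and \eqref{similar.size.apart} can be read off from specific equations in that reference. So there is no ``paper's own proof'' to compare against here, and outlining the strategy of the cited work is the appropriate thing to do.

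Your outline --- dyadic size decomposition, Markov/Borel--Cantelli control on the populated scales via \eqref{power.law}, thinning for the good set, and a scale-by-scale Vitali/greedy dilation procedure to build $J_k^\eps$ and the factors $\lambda_j^\eps$ --- matches the construction in \cite{GH19withoutpressure}. One phrasing is inaccurate, though: there is no ``standard lower bound $\geq \eps^{1+\delta/2}$ on pairwise distances between distinct Poisson points''. Poisson points can be arbitrarily close; what is true (and what the cited proof uses) is that the \emph{thinned} process $\Phi_{\eps^{\delta/2}}$ from \eqref{thinned.process} has asymptotically full density, so the points discarded to enforce the separation in \eqref{good.set} are negligible. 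This is a matter of wording rather than a gap in the argument, but it is worth stating correctly.
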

\begin{rem}
	Properties \eqref{eq:order.J_k} and \eqref{similar.size.apart} differ from the statement in \cite{GH19withoutpressure}.
	However, they follow directly from the construction in the proof, see 
	\cite[equations (3.11), (3.27) and (3.32)]{GH19withoutpressure} for \eqref{eq:order.J_k} and
	\cite[equation (3.14)]{GH19withoutpressure} for \eqref{similar.size.apart}.
\end{rem}

In order to define the set $E^\eps$, we need to recall the basic construction leading to the results above. For the sake of a leaner notation, when no ambiguity occurs we
will omit the index $\eps$ in the definitions below. We begin by introducing the notation 
\begin{equation}\label{definitions}
\begin{aligned}
	B_j &= B_{\aeps \rho_j}(\eps z_j),  \qquad B_{j,\theta} = B_{\theta \aeps \rho_j}(\eps z_j), 
	\qquad A_j = B_{j,\theta} \setminus B_j \quad \text{for } j \in n^\eps \\
	B_j & = B_{\lambda_j \aeps \rho_j}(\eps z_j)	, \qquad B_{j,\theta} = B_{\theta \lambda \aeps \rho_j}(\eps z_j), 
	\qquad A_j = B_{j,\theta}  \setminus B_j \quad \text{for } j \in J.
\end{aligned}
\end{equation}

\medskip

We recall the definition of the set $E_{k}$ for $-3 \leq k \leq k_{\max}+1$ from the proof of \cite[Lemma 3.2]{GH19withoutpressure}:
\begin{align}\label{remove.annuli}
	E_{k_{max} +1} := \emptyset, \ \ \ E_{k-1} := \biggl( E_l \setminus \bigcup_{z_j \in J_{k-1}}  A_j \biggr) \cup \bigcup_{z_j \in J_{k-1}}   B_j.
\end{align}
We now define
\begin{align}
	\label{E^eps}
	E^\eps := E_{-3} \cup H_g^\eps.
\end{align}
 
\medskip

As shown in \cite[equation (3.52)]{GH19withoutpressure}, for all $-3 \leq k \leq k_{\max}$ we may write
	\begin{align}
		\label{E_-3}
		E_{k} = \dot{\bigcup_{l \geq k}} \dot{{\bigcup_{j \in J_l}}} E^{z_i}_l,
	\end{align}	\\
with
\begin{equation}\label{E_k^z_j}
E^{z_j}_{k} := B_{j}\setminus \bigcup_{m = k}^{l-1} \bigcup_{z_i \in J_{m}}  B_{i,\theta}.
\end{equation}
Finally, we denote for any $z_j \in  \bigcup_{k = -3}^{\km} \bigcup_{z_i \in J_{k}}$
\begin{align}\label{def.E.z}
	E^{z_j} := E^{z_j}_{-3}.
\end{align}

\bigskip

The next technical lemma contains some further properties satisfied by the sets $E^{z_j}_k$ and $E^\eps_k$ defined above.  As it will become apparent in the next subsection,
the results stated below allow to tackle the main challenge in the construction of the Bogovski operator in Lemma \ref{lem:Bogovski}. We postpone the proof of this result
to the appendix.

\begin{lem} 
	\label{l:E_-3}
	For all almost every $\omega \in \Omega$ and all $\eps \leq \eps_0(\omega)$, we have
	\begin{align}
		\label{Cap.E^eps.2}
	 \operatorname{Cap}(E^\eps \backslash H^\eps) \to 0.
	\end{align}

	For every $-3 \leq k \leq \km$ and all  $z_j \in J_k$, let $B_j$ and $A_j$ be as in \eqref{definitions}. Then, there exists a constant $c=c(d) > 0$ such that
	\begin{align}
	\label{hole.comparable}
		|E^{z_j} \setminus E_{k+1}| \geq c |B_j|.
	\end{align}
	
	Furthermore,
	\begin{align}
		\label{char.E^z_j}
		E^{z_j} = E \cap B_j \setminus \biggl(\bigcup_{l=-3}^{k-2}\bigcup_{i \in J_l} E^{z_i}\biggr),
	\end{align}
	and
	\begin{align}
		\label{balls.in.annulus}
		A_j \cap E_{k+1} \cap E =  A_j \cap E_{k+1} \cap \biggl(\bigcup_{l=-3}^{k-2}\bigcup_{i \in J_l} E^{z_i}\biggr).
	\end{align}
\end{lem}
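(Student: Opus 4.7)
Plan. The statement gathers a probabilistic capacity bound \eqref{Cap.E^eps.2} with three deterministic geometric facts about the nested construction \eqref{E_-3}--\eqref{def.E.z}. The identities \eqref{char.E^z_j} and \eqref{balls.in.annulus} reduce to bookkeeping based on the disjoint decomposition \eqref{E_-3} together with the intra- and adjacent-level separation \eqref{similar.size.apart}; \eqref{hole.comparable} is a volume lower bound that relies on the construction of the dilation factors $\lambda_j^\eps \in [1,\Lambda]$ in \cite{GH19withoutpressure}; and \eqref{Cap.E^eps.2} is a moment/law-of-large-numbers estimate. I would treat the geometric statements first, and the main obstacle is expected to be \eqref{hole.comparable}.

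For \eqref{char.E^z_j}, I would start from the disjoint decomposition $E \cap B_j = \dot{\bigcup_{l}}\dot{\bigcup_{i \in J_l}}(E^{z_i}\cap B_j)$ coming from \eqref{E_-3} and eliminate terms level by level. For $i\neq j$ with $i\in J_{k-1}\cup J_k \cup J_{k+1}$, \eqref{similar.size.apart} gives $B_i \cap B_{j,\theta^2}=\emptyset$, whence $E^{z_i}\cap B_j = \emptyset$. For $i\in J_l$ with $l \geq k+2$, the definition \eqref{E_k^z_j} of $E^{z_i}=E^{z_i}_{-3}$ removes $B_{j,\theta}$ explicitly (the level $m=k$ lies in the removal range $-3\leq m \leq l-1$), so $E^{z_i}\cap B_j \subset E^{z_i}\cap B_{j,\theta}=\emptyset$. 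Only the term $i=j$ and the levels $l\leq k-2$ contribute, and since $E^{z_j}\cap B_j = E^{z_j}$, we obtain \eqref{char.E^z_j}. The same level-by-level elimination applied to $A_j \cap E_{k+1} \cap E$ yields \eqref{balls.in.annulus}: each surviving $E^{z_i}$ either lies in a ball $B_i$ disjoint from $B_{j,\theta^2}\supset A_j$ (levels $k-1,k,k+1$ with $i\neq j$) or removes $B_{j,\theta}\supset A_j$ (levels $\geq k+2$), while the $i=j$ term gives $E^{z_j}\cap A_j=\emptyset$ since $E^{z_j}\subset B_j$.

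For \eqref{hole.comparable}, which I expect to be the main obstacle, I would use \eqref{E_k^z_j} to rewrite
\[
  E^{z_j}\setminus E_{k+1} = B_j \setminus \biggl(E_{k+1}\cup \bigcup_{m=-3}^{k-1}\bigcup_{i\in J_m}B_{i,\theta}\biggr).
\]
By the update rule \eqref{remove.annuli}, $B_j$ is added to $E_k$ at step $k$, and the construction in \cite{GH19withoutpressure} chooses the dilation factor $\lambda_j^\eps\in[1,\Lambda]$ so that either $\lambda_j^\eps=\Lambda$ saturates (in which case the full $\Lambda$-dilate is available without touching other obstacles) or $\lambda_j^\eps<\Lambda$ is chosen maximal subject to a separation from the combined obstacle set $E_{k+1}\cup \bigcup_{m<k}\bigcup_{i\in J_m}B_{i,\theta}$. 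In either case, a ball of radius $\sim c\,\lambda_j^\eps\aeps\rho_j$ contained in $B_j$ is free of both obstructions, with $c=c(d,\theta,\Lambda)>0$; taking measures yields \eqref{hole.comparable}.

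For \eqref{Cap.E^eps.2}, since $H^\eps_g\subset H^\eps$ and $E_{-3}\subset \bar H^\eps_b$, we get $E^\eps\setminus H^\eps\subset \bar H^\eps_b$, so by subadditivity of harmonic capacity and $\operatorname{Cap}(B_r)\lesssim r^{d-2}$,
\[
  \operatorname{Cap}(E^\eps\setminus H^\eps)\lesssim \sum_{z_j\in J^\eps}(\lambda_j^\eps\aeps\rho_j)^{d-2}.
\]
Splitting the sum over the finitely many levels $k=-3,\ldots,\km$, I would use \eqref{eq:order.J_k} together with $\lambda_j^\eps\leq \Lambda$ to bound the radii and the moment \eqref{power.law} together with the law of large numbers for the Poisson process to bound the number of points per level; each level's contribution is then $o(1)$ as $\eps\to 0$, provided $\delta=\delta(d,\beta)$ is chosen small enough, exactly as in \cite{GH19withoutpressure}. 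Almost sure convergence along the full family $\eps\downarrow 0$ follows by Borel--Cantelli along a geometric sequence $\eps_n=2^{-n}$.
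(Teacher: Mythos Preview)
Your arguments for \eqref{char.E^z_j}, \eqref{balls.in.annulus} and \eqref{Cap.E^eps.2} are correct and essentially match the paper's (the only cosmetic difference is that you dispose of level $k{+}1$ via \eqref{similar.size.apart} while the paper uses the removal in \eqref{E_k^z_j}; both work).

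The gap is in \eqref{hole.comparable}. Your description of how $\lambda_j^\eps$ is chosen in \cite{GH19withoutpressure} is not accurate, and in any case the conclusion you draw --- that a sub-ball of $B_j$ of comparable radius is free of the combined obstruction $E_{k+1}\cup \bigcup_{m<k}\bigcup_{i\in J_m} B_{i,\theta}$ --- does not hold in general. Writing $\lambda_j^\eps = \theta^2\tilde\lambda_j^\eps$, the two facts actually available from \cite{GH19withoutpressure} are that the concentric sub-ball of radius $\theta^{3/2}\tilde\lambda_j^\eps\aeps\rho_j$ lies in $E^{z_j}$ (so it avoids the lower-level obstruction), and that the smaller concentric sub-ball of radius $\theta\tilde\lambda_j^\eps\aeps\rho_j$ is \emph{not contained in} $E_{k+1}$. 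The second fact is far weaker than separation: since $E_{k+1}$ is built from balls $B_i$ with $\rho_i \gg \rho_j$ by \eqref{eq:order.J_k}, one such $B_i$ can swallow almost all of $B_j$ while still leaving a point of the sub-ball outside.

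The paper extracts the volume bound from this weak information as follows. From the non-containment and the recursive structure of $E_{k+1}$ one finds a \emph{minimal} level $l \geq k+2$ and $z_i \in J_l$ for which the annulus $A_i$ meets the $\theta\tilde\lambda_j^\eps$-sub-ball. Because $\rho_i \gg \rho_j$, the annulus $A_i$ is essentially flat at the scale of $B_j$, so its intersection with the slightly larger $\theta^{3/2}\tilde\lambda_j^\eps$-sub-ball has measure $\gtrsim |B_j|$. A contradiction argument based on the minimality of $l$ then shows this intersection is disjoint from $E_{k+1}$; combined with the first fact it therefore lies in $E^{z_j}\setminus E_{k+1}$. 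Your sketch bypasses this mechanism entirely.
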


\subsection{Proof of Lemma 2.2}

For the sake of a leaner notation, we drop the index $\eps$ in all the quantities defined in Lemma \ref{lem:Bogovski} and Subsection \eqref{sub:geometry}. Furthermore, we employ the notation $\lesssim$ and $\gtrsim$ for $\leq C$ and $\geq C$ with a constant only depending on $d$, the exponent $q > d$ and the domain $D$. We define the set $E$ as in \eqref{E^eps}; by Lemma \ref{l:E_-3}, this set satisfies  \ref{it:E_eps} of Theorem \ref{t.pressure}.
	
\bigskip	
	
\noindent {\bf Step 1: Setup of the construction:}	

	Let $q >d$ and $r \in (0,1)$ be fixed. Let $g \in L^{q}_0( D_r \setminus E)$ and let us trivially extend it to $D \setminus E$. Our goal is to construct  $v \in H^1_0(D\setminus E)$
	solving  $\dv v = g$ in $D_r \setminus E$. Let $\bar g$ be any extension of $g$ to the whole set $D$. 
	

	For $\bar r := (1+r)/2$, let us assume that we may find $v_0 \in H^1_0(D_{\bar r})$ such that 
	\begin{align}\label{step.zero}
	\dv v_0 = \bar g \ \ \ \text{ on $D_{\bar r}$,} \ \ \ \| v_0 \|_{H^1(D_{\bar r})} \lesssim \| \bar g \|_{L^q(D_{\bar r})}.
	\end{align}
	Let $n^\eps$ be as in Lemma \ref{l.geometry} and, for any $z_j \in n^\eps$, $A_j$, $B_j$ and $B_{j,\theta}$ as in \eqref{definitions}. For all $z_j \in n^\eps$, let us assume that there exist $(v_j, p_j)$ solving
	\begin{align}
		\label{Stokes.annulus.divergence}
		\begin{cases}
		- \Delta v_j + \nabla p_j =0 \quad &\text{in } A_j \\
		\dv v_j =  \bar g \quad &\text{in } A_j \\
		v_j = 0  \quad &\text{on } \partial B_{j,\theta} \\
		v_j = -v_0 \quad &\text{in }B_j.
		\end{cases}
	\end{align}
	Recall that, by Lemma \ref{l.geometry} and \eqref{definitions}, $H^\eps_g := \bigcup_{z_j \in n^\eps} B_j $. Then, the function 	
	\begin{align}
		\label{eq:v_km+1}
		v^{(k_{max}+1)} := v_0 + \sum_{z_j \in n^\eps} v_j
	\end{align}
 solves 
\begin{align}
\begin{cases}
\dv v^{(k_{max}+1)} = \bar g \ \ \ &\text{in $D_{\bar r} \backslash H_g$}\\
v^{(k_{max}+1)}=0 \ \ \ &\text{in $H_g$.}
\end{cases}
\end{align}

We now want to iterate the previous precedure so that we gradually solve \eqref{Bogovski} in $D_{\bar r} \backslash E$, by gradually passing through the sets $D_{\bar r} \backslash (E_k \cup H_g)$, with $E_k$ as in  \eqref{remove.annuli}. Therefore, we need to iteratively solve, for $k= \km, \km -1, \cdots -3$, the boundary value problems
		\begin{align}
		\label{Annulus.divergence}
		\begin{cases}
		\dv v_j = g \quad &\text{in } A_j \cap E_{k+1} \\
		\dv v_j = 0 \quad &\text{in } A_j \setminus E_{k+1} \\
		v_j = 0  \quad &\text{on } \partial B_{j,\theta} \\
		v_j = -v^{(k+1)} \quad &\text{in }B_j,
		\end{cases}
	\end{align}	
	 for every $z_j \in J_k$ and define the functions
	\begin{align} \label{eq:def.v_k}
		v^{(k)} := v^{(k+1)} + \sum_{z_j \in J_k} v_j.
	\end{align}
We remark that the different equation for $\dv v_j$ in $A_j \cap E_{k+1}$ and $A_j \cap E_{k+1}$ is due to the fact that the sets $E_k$, $k= \km , \cdots, -3$ are not nested as, passing from $E_{k+1}$ to $E_{k}$, at each step we remove the intersections  $A_j \cap E_{k+1}$, $j \in J_k$. In these sets we thus need to restore the divergence condition $\dv v = \bar g$.
	
\medskip

We remark that in order to solve \eqref{step.zero}, \eqref{Stokes.annulus.divergence} and \eqref{Annulus.divergence} for every $k= \km, \cdots , -3$ and implement the previous construction, we need that the compatibility conditions 
\begin{align}\label{comp.cond.average}
\int_{D_{\bar r}} \bar g = 0,
\end{align}
\begin{align}
		\label{comp.cond.good}
		 \int_{\partial B_{j}} v_0 \cdot \nu = \int_{A_j} \bar g , \ \ \ \text{for all $z_j \in n^\eps$,}
	\end{align}
	\begin{align}
		\label{comp.cond.0}
		 \int_{A_j \cap E_{k+1}} g - \int_{\partial B_{j}} v^{(k+1)} \cdot \nu = 0, \ \ \ \text{ for all $z_j \in J_k$, for every $k = \km, \cdots, -3$}
	\end{align}
	are satisfied. Therefore, we need to find a suitable extension $\bar g$ of $g$ in $E$ such that previous identities hold. This is the goal of the next step.

\bigskip

\noindent {\bf Step 2: Extension of the function $g$:}	
	For  $z_j \in n^\eps$, we may simply choose $\bar g = 0$ in $B_j$. Indeed, this yields
		\begin{align}
		 \int_{\partial B_{j}} v_0 \cdot \nu = \int_{B_j} \dv v_0 = 0,
	\end{align}
	so \eqref{comp.cond.good} is satisfied.
	
	\medskip
	
	We now turn to \eqref{comp.cond.0}. For any $k= \km, \cdots, -3$ and $z_j \in J_k$, let $E^{z_j}$ be as in \eqref{E_k^z_j}. We define 
	\begin{align}\label{def.E.tilde}
	\tilde E^{z_j}_k := E^{z_j} \backslash E_k.
	\end{align}
	 We claim that we may choose $\bar g = g_j = \mathrm{const}$ in $\tilde E^{z_j}_k$   and $\bar g = 0$ in $E^{z_j} \cap E_k$, and that the constants $g_j$ are uniquely determined by \eqref{comp.cond.0}. Indeed, using \eqref{char.E^z_j} and \eqref{balls.in.annulus} of Lemma \ref{l:E_-3} combined with \eqref{comp.cond.0},  the constants $g_j$ are determined by the following formula:
\begin{equation} 		\label{comp.cond}
	\begin{aligned}
		0 &= \int_{A_j \cap E_{k+1}} \bar g - \int_{B_{j} \setminus E_{k+1}} \bar g \\
		&= \int_{A_j \cap E_{k+1} \backslash E} g + \sum_{l=-3}^{k-2} \sum_{z_i \in J_l} |\tilde E^{z_i} \cap A_j \cap E_{k+1}| g_{i} \\
		& - \int_{B_j \setminus (E_{k+1} \cup E)} g - |\tilde E^{z_j}| g_j - 
		\sum_{l=-3}^{k-2}\sum_{z_i \in J_l}   |\tilde E^{z_i} \cap B_j \setminus E_{k+1}| g_{i}.
	\end{aligned}
\end{equation}
	This formula indeed yields $g_j$ for all $z_j \in J_k$, provided we already know $g_i$ for $z_i \in \cup_{l=-3}^{k-1} J_l$. Therefore, all $z_j$, $j \in J$ are inductively defined by \eqref{comp.cond}. Note that by \eqref{safety.layer} and \eqref{D_b}, the balls $B_{j,\theta}$, $z_j \in n^\eps$ do not intersect with any of the balls $B_{j,\theta}$, $z_j \in J$. Therefore, the set $H_g^\eps$ does not play any role in the previous conditions and in the constructions of the solutions to \eqref{Annulus.divergence}.

	\medskip
	
	We observe that by this procedure we might extend the function $g$ non-trivially also in holes that are not contained in $D_r$,
	namely if they are within a cluster that intersects with $D_r$. This motivates the introduction of the auxiliary set $D_{\bar r}$, $r < \bar r < 1$. We remark that, for $\eps$ sufficiently small, $\bar g= 0$ in $D \setminus  D_{\bar r}$. This follows by an induction argument similar to the one at the end of Step 2 in the proof of \cite[Lemma 2.5]{GH19withoutpressure}. 
	 Indeed, $g_j = 0$ for all $j \in J_{-3}$ with $B_{\theta,j} \subset D \setminus D_\alpha$, and $ g_j = 0$ for $j \in J_{k}$ if $B_{\theta,j} \subset D \setminus D_\alpha$ and $B_{\theta,j} \cap B_{\theta,i} = \emptyset$ for all $i \in \cup_{l = -3}^{k-1}$ with $g_i \neq 0$.
	
	\medskip
		
	We conclude the proof of this step provided that $\bar g$  also satisfies \eqref{comp.cond.average}. We begin to observe that, since we extended $g$ to zero in $D_{\bar r } \setminus ( D_r \cup E^\eps)$ and $g \in L^q_0(D_r)$, we have
	\begin{align*}
	\int_{D_{\bar r}} \bar g &= \int_{ D_r} g + \int_{E_{-3}} \bar g = \int_{E_{-3}} \bar g .
	\end{align*}
	Furthermore, by \eqref{comp.cond} and the definition \eqref{remove.annuli} of the sets $E_k$ it also holds that
	\begin{align*}
		\int_{D_{\bar r}} \bar g = \int_{E_{-2}}\bar g + \sum_{j \in J_{-3}} \int_{B_j \setminus E_{-2}} \bar g - \int_{A_j \cap E_{-2}} \bar g  =  \int_{E_{-2}} \bar g.
	\end{align*}
	We may thus iterate the previous procedure and obtain that 
	\begin{align*}
	\int_{D_{\bar r}} \bar g = \int_{E_{\km + 1}} \bar g = 0,
	\end{align*}
 since $E_{k_{\max} + 1} = \emptyset$ (cf. \eqref{remove.annuli}).

	\bigskip	
	
	\noindent {\bf Step 3. Conclusion}	
	We begin by arguing that $\bar g$ defined in the previous step has $L^q$-norm that remains comparable to the one of $g$. More precisely, we claim that
	\begin{align}
		\label{est.g}
		\|\bar g\|^{q}_{L^{q}(D_{\bar r} )} \lesssim \|g \|^{q}_{L^{q}(D_r \setminus E)}.
	\end{align}
 We set $J = \bigcup_{k=-3}^{\km} J_k$  as in Lemma \ref{l.geometry}. Since by definition of $\bar g$ and \eqref{E_-3}, we have
	\begin{align}\label{norm.g.decomposition}
		\|\bar g\|^{q}_{L^{q}(D_{\bar r})} = \|g\|^{q}_{L^{q}(D_r \setminus E)} + \sum_{k=-3}^{\km}\sum_{z_j \in J_k} |\tilde E^{z_j}_k | |g_j|^{q},
	\end{align}
	\eqref{est.g} follows provided that for every $k= \km, \cdots, -3$ and all $z_j \in J_k$ we show that
	\begin{equation}
	\begin{aligned}
		\label{est.g_j}
		|\tilde E^{z_j}_k| |g_j| \leq (2k_{max} + 3)^{k+3} \|g\|_{L^{1}(\theta B_{\theta,j} \setminus E)},\\
		\#\{z_j  \in  J \colon x \in \theta B_{\theta,j}\} \leq k_{max} + 1 \ \ \ \text{for every $x \in D_{\bar r}$.}
	\end{aligned}
	\end{equation}
	Here,  by \eqref{definitions}, the set $\theta B_{\theta, j} = B_{\theta^2 \lambda_j \aeps \rho_j}(\eps z_j)$.

	\medskip
	
	Indeed, for every $k= \km, \cdots ,-3$ and $z_j \in J_k$ the first inequality in \eqref{est.g_j} and \eqref{hole.comparable} of Lemma \ref{l:E_-3} imply that
	\begin{align}
		|\tilde E^{z_j}_k| |g_j|^{q} \lesssim \frac 1 {|\tilde E^{z_j}_k|^{q - 1}} \|g\|^{q}_{L^{1}(\theta B_{\theta,j} \setminus E)}
		\lesssim   \|g\|^{q}_{L^{q}(\theta B_{\theta,j} \setminus E)}.
	\end{align}
	This, together with \eqref{norm.g.decomposition} and the second inequality in \eqref{est.g_j}  imply \eqref{est.g}.

	\medskip

	We prove the first inequality in \eqref{est.g_j} by induction over $k$. For $z_j \in J_{-3}$ we use $B_j \in E$ to deduce from \eqref{comp.cond}
	\begin{align}
	|\tilde E^{z_j}| g_j = \int_{A_j \cap E_{k+1} \backslash E} g.
	\end{align}
	Thus, \eqref{est.g_j} holds for $k=-3$.
	Assume that \eqref{est.g_j} holds for all $1 \leq l \leq k-1$ and consider $z_j \in J_{k}$. Then, by \eqref{comp.cond},
	\begin{equation}	
	\begin{aligned}
		\label{est.g_j.1}
		|\tilde E^{z_j}| |g_j| &\leq \int_{B_{\theta,j} \setminus E} |g| +  \sum_{l=-3}^{k-2} \sum_{z_i \in J_l} |B_{\theta,j} \cap \tilde E^{z_{i}}| |g_{i}| \\
		&\leq \|g\|_{L^{1}(B_{\theta,j} \setminus E)} +  \sum_{l=-3}^{k-2} \sum_{\substack{z_i \in J_l \\ B_i \cap B_{\theta,j} \neq \emptyset}} (2k_{max} + 3)^{k+1} \|g\|_{L^{1}(\theta B_{\theta,i} \setminus E)},
	\end{aligned}
	\end{equation}
	where we used that $\tilde E^{z_i} \subset B_i$.
	We observe that for $z_i \in J_l$, $l \leq k - 2$ with $ B_i \cap B_{\theta,j} \neq \emptyset$ we have $\theta B_{\theta,i} \subset \theta B_{\theta,j}$ since $\rho_i \ll \rho_j$ by \eqref{eq:order.J_k}. 
	Moreover, for every $x \in \theta B_{\theta,j}$, 
	\begin{align}\label{number.overlappings}
		\#\{z_{i} \in \cup_{l=-3}^{k-2} J_l: x \in \theta B_{\theta,i}\} \leq k + 1,
	\end{align}
	since, by \eqref{similar.size.apart}, $\theta B_{\theta,i_1} \cap \theta B_{\theta,i_2} = \emptyset$ whenever $z_{i_1} \neq z_{i_2} \in J_l$ for some 
	 $-3 \leq l \leq k-2$ .
	Using this in \eqref{est.g_j.1} yields the first line \eqref{est.g_j}. Furthermore, the same argument for \eqref{number.overlappings} implies the second line in \eqref{est.g_j}. This concludes the proof of \eqref{est.g}.

	\medskip
	
	We conclude the proof of this lemma provided that we show that the function $v:=v^{-3}$ defined in \eqref{eq:def.v_k} of Step 1 satisfies \eqref{Bogovski}. Note that, by Step 2, the extension $\bar g$ satisfies the compatibility conditions \eqref{comp.cond.average}-\eqref{comp.cond.0} and thus each $v^k$ constructed in \eqref{eq:def.v_k}  of Step 1 is well-defined. In particular, note that by the standard Bogovski Lemma (see Lemma \ref{lem.annulus.divergence} in the Appendix) there exists $v_0$ satisfying \eqref{step.zero}. Furthermore, by \eqref{est.g}, we also have that
		\begin{equation}
	\label{v_0.estimate}
			\|v_0\|_{H^{1}(D_{\bar r}} \lesssim \|g\|_{L^{q}( D_r \setminus E)}.
	\end{equation}	
	We claim that for each $k= \km+1, \cdots, -3$, the function $v^k$ satisfies
	\begin{equation} 
		\label{v^(k)}
	\begin{aligned}
		\dv v^{(k)} &= g \qquad \text{in } D \setminus E_k \\
		v^{(k)} &= 0 \qquad \text{in } H_g^\eps \cup E_{k}, \\
		\| v^{(k)} \|_{H^1} + \|v^{(k)}\|_{C^0} &\lesssim \| g \|_{L^{q}}.
	\end{aligned}
	\end{equation}
	Note that this, in the case $k=-3$, immediately yields Lemma \ref{Bogovski} for $v= v^{(-3)}$. It is easy to see by induction over $k$ and the definition \eqref{remove.annuli} of the sets $E_k$ that the first two identities above are satisfied. It remains to prove the third one. We argue by induction over $k$ and begin with $k= \km + 1$. 

\medskip

	By  (iv) of \cite[Lemma B.2]{GH19withoutpressure}, we have for the solution $v_j$ to \eqref{Stokes.annulus.divergence}
		\begin{align}
		\label{eq:est.stokes.annulus}
		\|v_j\|_{H^{1}(B_{\theta,j})} &\lesssim \|v_0\|_{H^1(B_{\theta,j})} + R_j^{\frac{d-2}{2}}\|v_0\|_{L^\infty}, \\
		\|v_j\|_{C^0} &\lesssim \|v_0\|_{C^0},
	\end{align}
	Moreover, by Lemma \ref{lem.annulus.divergence}, we can find a solution  $v_j$ to \eqref{Annulus.divergence} with
		\begin{equation} 		\label{Annulus.divergence.est}
	\begin{aligned}
		\|v_j\|_{H^1}  &\lesssim  \|v^{(k+1)}\|_{H^1(B_{\theta,j})} + \|\bar g\|_{L^{2}(B_{\theta,j})} 
		  R_j^{\frac{d-2}{2}} \Bigl(\|v^{(k+1)}\|_{C^0} + \|\dv v^{(k+1)} \|_{L^{q}(B_r)} + \|\bar g\|_{L^{q}}\Bigr), \\
		\|v_j\|_{C^0} &\lesssim \|v^{(k+1)}\|_{C^0} + \|\dv v^{(k+1)} \|_{L^{q}(B_r)} + \|\bar g\|_{L^{q}},
	\end{aligned}
	\end{equation}
	
	Thus, since the functions $v_j$ have disjoint support due to \eqref{good.set}, using \eqref{eq:est.stokes.annulus} and \eqref{est.g}
	\begin{align} \label{eq:v_km.infty}
		\|v^{(k_{max}+1)}\|_{C^0} \lesssim \|g\|_{L^{q}}
	\end{align}
	and 
	\begin{align} \label{eq:v_km.H1}
		\| v^{(k_{max}+1)} \|^2_{H^1} = \sum_{z_j \in n^\eps} \|v_j\|_{H^1}^2  
		& \lesssim \sum_{z_j \in n^\eps} \| v_0 \|_{H^1(B_{\theta,j})}^2 + \aeps \rho_j \|v_0\|_{L^\infty} \lesssim \|g\|_{L^{q}},
	\end{align}
	almost surely, for $\eps$ small enough. This concludes the proof of \eqref{v^(k)} for $v^{(\km +1)}$.  
	
Let us now assume that  \eqref{v^(k)} holds for some $k + 1$. Then, using that $ |\dv v^{(k+1)}| \leq |g|$ pointwise together with the estimates for $v^{(k+1)}$,
we get the estimate in \eqref{v^(k)} analogously as we obtained the estimates for $v^{(k_{max}+1)}$. 

\medskip

We conclude the proof by observing that each $v^{(k)} \in H^1_0(D)$, since we only changed $v^{(k+1)}$ in $B_{\theta,j}$ for holes that are in a cluster that overlaps with $D_{\alpha'}$. These balls are contained in $D$ by an argument analogous to the one at the end of Step 2 in the proof of \cite[Lemma 2.5]{GH19withoutpressure}. \null\hfill\qedsymbol



\section{Appendix}\label{appendix}

\begin{proof}[Proof of Lemma \ref{l.oscillating.test.functions}]
The proof of this lemma follows by simply observing that the construction of the operator $R_\eps$ in \cite[Lemma 2.5]{GH19withoutpressure} does not strictly require 
that the function $\varphi$ is compactly supported. Morevoer, from a careful look to the construction of $R_\eps$ and the properties of $H^\eps$ in Lemma \ref{l.geometry} it 
is immediate to infer that $R_\eps \varphi=0$ in $E^\eps$. 
\end{proof}

\bigskip

\begin{proof}[Proof of Lemma \ref{l:E_-3}]
We begin by showing \eqref{Cap.E^eps}. By definition \eqref{E^eps} and \eqref{D_b} of Lemma \ref{l.geometry}, we have that $E^\eps \setminus H_\eps \subset D^\eps_b$.
Hence, the sub-additivity of the harmonic capacity  and Lemma \cite[Lemma C.2]{GH19withoutpressure} yield
	\begin{align}
		\mathrm{Cap}( E^\eps \setminus H_\eps) \leq \sum_{z_j \in J^\eps} \mathrm{Cap}\Bigl( B_{\Lambda \aeps \rho_j}(\eps z_j)\Bigr)  \lesssim \eps^d \sum_{z_j \in J^\eps} \rho_j^{d-2} \to 0
	\end{align}
	almost surely as $\eps \to 0$.
	

\medskip

 We now turn to \eqref{char.E^z_j}: Let us recall the definition of the sets $J, J_{-3}, \cdots J_{\km}$ of Lemma \ref{l.geometry}. We fix $k= -3, \cdots, \km$. For every
 $z_j\in J_k$, let $B_j$ be as in \eqref{definitions}. Then, definitions \eqref{E^eps}, \eqref{D_b} and property \eqref{safety.layer} of Lemma \ref{l.geometry} imply
 that $E \cap B_j= E_{-3} \cap B_j$. Moreover, by \eqref{E_k^z_j}--\eqref{def.E.z} we also have $E^{z_j} \subset B_j$. Hence, identity \eqref{E_-3} implies
	\begin{align}
		E^{z_j} = E \cap B_j \setminus \bigcup_{\substack{i \in J \\ i \neq j}} E^{z_i}.	
	\end{align}	
On the one hand, the definition of $E^{z_i}$ yields $B_j \cap E^{z_i} = \emptyset$ for $i \in J_l$, $l > k$. On the other hand, $B_j \cap E^{z_i} = \emptyset$ for 
$ j \neq i \in J_k \cup J_{k-1}$ by  \eqref{similar.size.apart}. These and the above identity immediately imply \eqref{char.E^z_j}.

\medskip

We turn to the proof of \eqref{balls.in.annulus}. As before, we fix $k$ and $z_j \in J_k$. Let $A_j$ be as in \eqref{definitions}. Since by \eqref{D_b} and \eqref{safety.layer},
we have $H_g \cap A_j = \emptyset$, identity \eqref{balls.in.annulus} immediately follows provided we argue that for all $z_i \in J_l$ with $l \geq k - 1$,
\begin{align}\label{balls.in.annulus.aux}
	E^{z_i} \cap A_j \cap E_{k+1} = \emptyset.
\end{align}
For $z_i = z_j$ this follows directly from the definition of $E^{z_j}$. For $l = k,k-1$ this is implied by  \eqref{similar.size.apart}.
Finally, for $ l \geq k+1$ identity \eqref{balls.in.annulus.aux} is obtained from 
\begin{align}
 E_{k+1} = \bigcup_{l \geq k+1} {\bigcup_{j \in J_l}} E^{z_i}_l \supset \bigcup_{l \geq k+1} {\bigcup_{j \in J_l}} E^{z_i},
\end{align}
which relies on \eqref{E_-3}. This establishes  \eqref{balls.in.annulus.aux}, as well as  \eqref{balls.in.annulus}.

%

\medskip

To conclude the proof of this lemma, it remains to show property \eqref{hole.comparable}. To do so, we need the following two ingredients which are contained in 
\cite{GH19withoutpressure}. We recall that the dilation parameters $\lambda_j^\eps$ of Lemma \ref{l.geometry} may be written as $\lambda_j^\eps = \theta^2 \tilde \lambda_j^\eps$ for some
$\theta \geq 1$ \cite[Step 2 of Lemma 2.3]{GH19withoutpressure}. Then, for all $-3 \leq k \leq k_{\max}$ and all $z_j \in J_k$,
we have
\begin{align}
	\label{eq:char.J_k}
	B_{\theta \tilde \lambda_j^\eps \aeps \rho_j}(\eps z_j) \not \subset E_{k+1},
\end{align}
and
\begin{align}
	\label{eq:E^z_j.contains.ball}
	B_{\aeps \theta^{3/2} \tilde \lambda_j^\eps \rho_j}(\eps z_j) \subset E^{z_j}.
\end{align}
Indeed, \eqref{eq:char.J_k} is the same as \cite[equation (3.32)]{GH19withoutpressure}. Moreover, for $\theta^{3/2}$ replaced by $\theta$, \eqref{eq:E^z_j.contains.ball} is just \cite[equation (3.49)]{GH19withoutpressure}.
The proof of \cite[equation (3.49)]{GH19withoutpressure} holds indeed also when replacing the parameter $\theta$ by any $\gamma$ such that $\gamma < \theta^2$.

\medskip

Let $k\in \{-3, \cdots, \km \}$ and $z_j \in J_k$ be fixed. If  $E^{z_j} \cap E_{k+1} = \emptyset$, then \eqref{hole.comparable}  directly follows from \eqref{eq:E^z_j.contains.ball}.
If, otherwise, $E^{z_j} \cap E_{k+1} \neq \emptyset$ then, by definition, also $B_j \cap E_{k+1} \neq \emptyset$. Hence, there exists $z_i \in J_l$, $l \geq k+1$ such
that $B_j \cap B_i \neq \emptyset$. This, together with \eqref{eq:char.J_k} and the definition of $E_{k+1}$, implies that there also exists $z_i \in J_l$, $l \geq k+1$
such that 
\begin{align}\label{minimality}
	A_i \cap B_{\theta \tilde \lambda_j^\eps \aeps \rho_j}(\eps z_j)  \neq \emptyset.
\end{align} 
Let $l \geq k+1$ be the smallest index satisfying the above inequality and let us fix $z_i \in J_l$ as center of the annulus $A_i$. By \eqref{similar.size.apart}, we necessarily 
have that $l \geq k+2$. We now claim that it suffices to argue
\begin{align}
\label{unique.parent.contradiction}
	A_i \cap B_{\theta^{3/2} \tilde \lambda_j^\eps \aeps \rho_j}(\eps z_j) \subset  E^{z_j} \backslash E_{k+1}.
\end{align}
This, indeed, immediately yields that
\begin{align}\label{inclusion.measure}
|E^{z_j} \backslash E_{k+1}| \geq |A_i \cap B_{\theta^{3/2} \tilde \lambda_j^\eps \aeps \rho_j}(\eps z_j)|.
\end{align}
Note that the radius of the ball in \eqref{unique.parent.contradiction} has been dilated by the factor $\theta^{\frac 12}$ with respect to the radius in \eqref{minimality}.
Furthermore, the fact that $l \geq k+2$ implies by \eqref{eq:order.J_k} in Lemma \ref{l.geometry} that $\rho_i \gg \rho_j$. These two considerations, together with 
\eqref{minimality}, imply that $|A_i \cap B_{\theta^{3/2} \tilde \lambda_j^\eps \aeps \rho_j}(\eps z_j)| \gtrsim |B_j|$ and thus allow to conclude \eqref{hole.comparable} from 
\eqref{inclusion.measure}. 

\bigskip

It thus remains only to prove \eqref{unique.parent.contradiction}. By \eqref{eq:E^z_j.contains.ball}, it suffices to show
\begin{align}
A_i \cap B_{\theta^{3/2} \tilde \lambda_j^\eps \aeps \rho_j}(\eps z_j) \cap E_{k+1} = \emptyset.
\end{align}
We prove this by contradiction. Let us assume that the above identity does not hold. Then, by definition of $E_{k+1}$ and \eqref{similar.size.apart},
there exists $z_m \in J_m$ with $k+2 \leq m \leq l - 2$ such that
\begin{align}
	A_i \cap B_{\theta^{3/2} \tilde \lambda_j^\eps \aeps \rho_j}(\eps z_j) \cap B_m \neq \emptyset.
\end{align}
However, using again that $k+2 \leq m$ implies $\rho_m \gg \rho_j$ by \eqref{eq:order.J_k}, this means
\begin{align}
		 B_{\theta \tilde \lambda_j^\eps \aeps \rho_j}(\eps z_j) \subset B_{m,\theta}.
\end{align}
Hence, either
\begin{align}
	A_m \cap B_{\theta \tilde \lambda_j^\eps \aeps \rho_j}(\eps z_j)  \neq \emptyset,
\end{align}
or 
\begin{align}
	B_{\theta \tilde \lambda_j^\eps \aeps \rho_j}(\eps z_j) \subset B_{m}.
\end{align}
The first case immediately contradicts the fact that $l$ is the minimal index for which we have \eqref{minimality}. In the second case, \eqref{eq:char.J_k} and the definition
of $E_{k+1}$ imply the existence of another index $\bar m$, with $ k+2 \leq \tilde m \leq m -2$, for which we may find and $z_{\tilde m} \in J_{\tilde m}$ such that 
\begin{align}
	A_{\tilde m} \cap B_{\theta \tilde \lambda_j^\eps \aeps \rho_j}(\eps z_j)  \neq \emptyset.
\end{align}
This as well, contradicts the minimality of $l$. The proof of \eqref{unique.parent.contradiction} is hence complete.
\end{proof}

\bigskip

Below we summarize some standard results for the solutions to the Stokes equation in annular and exterior domains (see, e.g. \cite{Galdi, AllaireARMA1990a}).  
These are extensively used in the proof of Lemma \ref{lem:Bogovski}.
\begin{lem}
	\label{lem.annulus.divergence}
	Let $q>d$ and let $0 < r < 1$, $\theta > 1$, $B_r := B_r(0)$, $B_{r \theta} := B_{r \theta}(0)$, $A_{r,\theta} := B_{r \theta} \setminus B_r$.
	Assume $g \in L^{q}(B_{r \theta})$ and $v \in H^1(B_{r \theta}) \cap C^0(B_{r \theta})$ with $\dv v \in L^{q}(B_r)$ satisfy
	\begin{align}
		 \int_{A_{r,\theta}}  g + \int_{\partial B_{r}} v \cdot \nu = 0.
	\end{align}
	Then, there exists $u \in H^1_0(B_\theta) \cap C^0(B_\theta)$ solving
	\begin{align}
		\label{Annulus.divergence.lemma}
		\begin{cases}
		\dv u = g \qquad &\text{in } A_{r,\theta}  \\
		u = 0  \qquad &\text{on } \partial B_{r \theta} \\
		u = v \qquad &\text{in }B_r,
		\end{cases}
	\end{align}	
	with
	\begin{align}
		\|u\|_{H^1} &\leq C \|v\|_{H^1} + \|g\|_{L^2} + r^{\frac{d-2}{2}}(\|v\|_{C^0} + \|\dv v \|_{L^{q}(B_r)} + \|g\|_{L^{q}})), \\
		 \|u\|_{C^0} &\leq C \|v\|_{C^0} + \|\dv v \|_{L^{q}(B_r)} + \|g\|_{L^{q}}).
	\end{align}
	with $C= C(\theta,d,q)$.
\end{lem}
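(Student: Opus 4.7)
The plan is to reduce the problem to a standard Bogovski construction on the ball $B_{r\theta}$, combined with a homogeneous Stokes problem in the annulus to enforce the boundary condition $u = v$ in $B_r$. First, I extend $g$ to the whole ball by setting $G := g$ on $A_{r,\theta}$ and $G := \dv v$ on $B_r$. By the divergence theorem on $B_r$ and the compatibility hypothesis,
\begin{align*}
\int_{B_{r\theta}} G = \int_{A_{r,\theta}} g + \int_{B_r} \dv v = \int_{A_{r,\theta}} g + \int_{\partial B_r} v \cdot \nu = 0,
\end{align*}
so $G \in L^q_0(B_{r\theta})$. Classical Bogovski theory on a ball (see e.g.~\cite{Galdi}), whose constants are scale-invariant by rescaling to $B_1$, then produces $U \in H^1_0(B_{r\theta}) \cap W^{1,q}_0(B_{r\theta})$ satisfying $\dv U = G$ with $\|\nabla U\|_{L^2} \lesssim \|G\|_{L^2}$ and $\|\nabla U\|_{L^q} \lesssim \|G\|_{L^q}$. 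Since $q > d$, the Sobolev embedding $W^{1,q}_0 \hookrightarrow C^0$ on $B_{r\theta}$ (combined with Poincar\'e and the scaling factor $(r\theta)^{1 - d/q} \leq \theta^{1-d/q}$) yields $\|U\|_{C^0} \lesssim \|G\|_{L^q}$, uniformly in $r \leq 1$.

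Next, I correct $U$ inside $B_r$ so that the final function agrees with $v$. Let $w \in H^1(A_{r,\theta}) \cap C^0(A_{r,\theta})$ be the solution of the homogeneous Stokes problem in the annulus
\begin{align*}
-\Delta w + \nabla q = 0, \quad \dv w = 0 \quad \text{in } A_{r,\theta}, \qquad w|_{\partial B_r} = U - v, \qquad w|_{\partial B_{r\theta}} = 0.
\end{align*}
The compatibility $\int_{\partial B_r}(U - v) \cdot \nu = 0$ follows from the divergence theorem on $B_r$ together with $\dv U = G = \dv v$ there. Applying \cite[Lemma~B.2]{GH19withoutpressure} --- the same Stokes-on-annulus estimate already used in the proof of Lemma~\ref{lem:Bogovski} --- gives
\begin{align*}
\|w\|_{H^1} \lesssim \|U - v\|_{H^1(B_r)} + r^{\frac{d-2}{2}} \|U - v\|_{C^0}, \qquad \|w\|_{C^0} \lesssim \|U - v\|_{C^0}.
\end{align*}

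Finally, I define $u := v$ on $B_r$ and $u := U - w$ on $A_{r,\theta}$. The inner trace of $u$ on $\partial B_r$ is $v$, while the outer trace is $U - w = U - (U - v) = v$, so $u$ is continuous across $\partial B_r$ and belongs to $H^1_0(B_{r\theta}) \cap C^0(B_{r\theta})$; moreover $u = 0$ on $\partial B_{r\theta}$ and $\dv u = \dv U - \dv w = g$ in $A_{r,\theta}$, which verifies \eqref{Annulus.divergence.lemma}. Combining the previous steps with the triangle inequality and the elementary bounds $\|G\|_{L^2} \leq \|g\|_{L^2} + \|v\|_{H^1}$, $\|G\|_{L^q} \leq \|g\|_{L^q} + \|\dv v\|_{L^q(B_r)}$, and $\|U - v\|_{C^0} \leq \|U\|_{C^0} + \|v\|_{C^0}$ produces exactly the estimates announced in the lemma. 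The main obstacle lies in tracking the $r$-dependence across the rescalings: the Bogovski constants on the ball $B_{r\theta}$ are genuinely scale-invariant, but the $r^{\frac{d-2}{2}}$ factor in front of the $C^0$ norm in the Stokes estimate is precisely what feeds the corresponding factor into the final bound, and one must check that no worse power of $r$ sneaks in from the Poincar\'e/Sobolev step on $B_{r\theta}$.
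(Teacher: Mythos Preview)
Your proof is correct and follows essentially the same approach as the paper: decompose $u$ as a Bogovski solution $U$ on the full ball $B_{r\theta}$ (with right-hand side $G$ equal to $g$ in the annulus and $\dv v$ in $B_r$) plus a homogeneous Stokes correction in the annulus to match the boundary data $v$ on $\partial B_r$. The paper writes this as $u=u_1+u_2$ with $u_2$ having boundary value $v-u_1$, which is your $-w$; the estimates and the tracking of the $r^{(d-2)/2}$ factor are handled identically.
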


\begin{proof}
	We will define $u = u_1 + u_2$, where $u_1$ solves
	\begin{align}
		\label{u_1}
		\begin{cases}
		\dv u_1 = g \qquad &\text{in } A_{r,\theta}  \\
		\dv u_1 = \dv v \qquad &\text{in } B_r  \\
		u_1 = 0  \qquad &\text{on } \partial B_{r \theta}, \\
	\end{cases}
	\end{align}	
	and  $u_2$ is the solution to 
	\begin{align} 		\label{u_2}
	\begin{cases}
		-\Delta u_2 + \nabla p = 0 \qquad &\text{in } A_{r, \theta}  \\
		\dv u_2 = 0 \qquad &\text{in } A_{r,\theta}  \\
		u = 0  \qquad &\text{on } \partial B_{r \theta} \\
		u = v - u_1 \qquad &\text{in }B_r.
		\end{cases}
	\end{align}
	As it is well known (see e.g. \cite{Galdi}[Theorem 3.1]), the first problem has a solution with 
	\begin{align}
		\|u_1\|_{H^1} &\lesssim \|\dv v \|_{L^{2}(B_r)} + \|g\|_{L^{2}}, \\
		\|u_1\|_{W^{1,q}} &\lesssim \|\dv v \|_{L^{q}(B_r)} + \|g\|_{L^{q}}.
	\end{align}
	By Sobolev inequality, 
	\begin{align}
		\|u_1\|_{C^0} \lesssim\|\dv v \|_{L^{q}(B_1)} + \|g\|_{L^{q}}.
	\end{align}
	Using \cite[Lemma B.1]{GH19withoutpressure} rescaled with $r$ for the solution to \eqref{u_2}, we find
	\begin{align}
		\|\nabla u_2\|_{L^2} &\lesssim \| \nabla (v - u_1)\|_{L^2} + \frac 1 r \|v - u_1\|_{L^2} \lesssim \| \nabla v\|_{L^2} + \|\nabla u_1\|_{L^2} + r^{\frac {d-2}{2}} 
		\|v - u_1\|_{C^0} \\
		&\lesssim \| \nabla v\|_{L^2} + \|g\|_{L^{2}} + r^{\frac {d-2}{2}} \Bigl(\|v\|_{C^0}\| +  \|\dv v \|_{L^{q}(B_1)} + \|g\|_{L^{q}}\Bigr),
	\end{align}
	and
	\begin{align}
		\|u_2\|_{C^0}\|  \lesssim \|v - u_1\|_{C^0}\| \lesssim \|v\|_{C^0}\| +  \|\dv v \|_{L^{q}(B_1)} + \|g\|_{L^{q}}.
	\end{align}
	Combining theses inequalities for $u_1$ and $u_2$ (and the Poincare inequality) yields the desired estimate for $u$.
\end{proof}

\end{document}